\documentclass[12pt,reqno]{amsart}
\usepackage{graphicx}
\usepackage{amssymb}

\textwidth=16.00cm
\textheight=22.00cm
\topmargin=0.00cm
\oddsidemargin=0.00cm
\evensidemargin=0.00cm
\headheight=14.4pt
\headsep=1cm
\numberwithin{equation}{section}
\hyphenation{semi-stable}
\emergencystretch=10pt

\vfuzz2pt 
\hfuzz2pt 
\newtheorem{thm}{Theorem}[section]
\newtheorem{cor}[thm]{Corollary}
\newtheorem{lem}[thm]{Lemma}
\newtheorem{prop}[thm]{Proposition}

\theoremstyle{definition}
\newtheorem{defn}[thm]{Definition}
\theoremstyle{remark}
\newtheorem{rem}[thm]{Remark}
\newtheorem{notation}[thm]{Notation}
\numberwithin{equation}{section}

\newcommand\Hom{\operatorname{Hom}}

\newcommand\Ext{\operatorname{Ext}}

\newcommand\ara{\operatorname{ara}}
\newcommand\height{\operatorname{height}}
\newcommand\cd{\operatorname{cd}}
\newcommand\Rad{\operatorname{Rad}}
\newcommand\Ann{\operatorname{Ann}}
\newcommand\Supp{\operatorname{Supp}}
\newcommand\Spec{\operatorname{Spec}}
\newcommand\Tor{\operatorname{Tor}}

\newcommand{\qism}{\stackrel{\sim}{\longrightarrow}}
\newcommand\grade{\operatorname{grade}}
\newcommand\depth{\operatorname{depth}}
\newcommand\id{\operatorname{id}}

\begin{document}

\title[Cohomologically Complete Intersections]{On Cohomologically Complete Intersections in Cohen-Macaulay Rings}%
\author{Waqas Mahmood}%
\address{Abdus Salam School of Mathematical Sciences, GCU, Lahore Pakistan}%
\email{waqassms$@$gmail.com}%

\thanks{This research was partially supported by Higher Education Commission, Pakistan}
\subjclass[2000]{13D45.}
\keywords{Local cohomology, Cohomologically complete intersection, Local duality}%
\begin{abstract}
An ideal $I$ of a local Cohen-Macaulay ring $(R,{\mathfrak m})$  is called a cohomologically complete intersection if $H^i_I(R)= 0$ for all $i\neq c := \height(I)$. Here $H^i_I(R)$, $i\in\mathbb{Z}$ denotes the local cohomology of $R$ with respect to $I$. For instance, a set-theoretic complete intersection is a cohomologically complete intersection. Here we study cohomologically complete intersections from various homological points of view. As a main result it is shown that the vanishing $H^i_I(M) = 0$ for all $i \not= c$ is completely encoded in homological properties of $H^c_I(M)$.  These results extend those of Hellus and Schenzel (see \cite[Theorem 0.1]{pet1}) shown in the case of a local Gorenstein ring. In particular we get a characterization of cohomologically complete intersections in a Cohen-Macaulay ring
in terms of the canonical module.
\end{abstract}
\maketitle
\section{Introduction}
Let $(R, \mathfrak m)$ denote a local Noetherian ring. For an ideal
$I \subset R$ it is a rather difficult question to determine the smallest
number $n\in \mathbb N$ of elements $a_1,..., a_n \in R$ such that $\Rad I =
\Rad (a_1,...,a_n)R.$ This number is called the arithmetic rank, $\ara I,$
of $I.$ By Krull's generalized principal ideal theorem it follows that
$\ara I\geq \height(I).$ Of a particular interest is the case whenever $\ara I = \height(I).$
If this equality holds then $I$ is called a set-theoretic complete intersection.

For the ideal $I$ let $H^i_I(\cdot), i \in \mathbb Z,$ denote the local cohomology
functor with respect to $I,$ see \cite{goth} for its definition and basic results. The
cohomological dimension, $\cd(I),$ defined by
\[
 \cd(I)= \sup     \{ i \in \mathbb Z | H^i_I(R) \not= 0\}
\]
is another invariant related to the ideal $I.$ It is well known that
\[
\grade I \leq \height(I) \leq \cd(I)\leq \ara I.
\]
In particular, if $I$ is a set-theoretically complete intersection
it follows that $\height(I) = \cd(I)$. The converse does not hold in general (see   \cite[Remark 2.1(ii)]{he1}). Not
so much is known about ideals with the property of $ \grade I = \cd(I)$. We call those
ideals cohomologically complete intersections. In this paper we continue with
the investigations of cohomologically complete intersections, in particular when
$I$ is an ideal in a Cohen-Macaulay ring $(R, \mathfrak m).$ In this case $I$ is a cohomologically complete intersection if $\height(I) = \cd(I)$.

As an application of our main results there is a characterization of
cohomologically complete intersections. In fact,
this provides a large number of necessary conditions for an ideal to be a
set-theoretic complete intersection in a local Cohen-Macaulay ring.

\begin{thm} \label{1.1}
Let $(R,\mathfrak m)$ be a ring of dimension $n, I\subseteq R$ be an ideal of
$\grade(I,M)= c$, and $M\neq 0$ be a maximal Cohen-Macaulay module with $\id_R(M)< \infty$. Then the following
conditions are equivalent:
\begin{itemize}
\item[(a)] $H^i_I(M)= 0$ for all $i\neq c.$
\item[(b)] For all ${\mathfrak p}\in V(I)\cap \Supp_R(M)$ the natural homomorphism
\[
H^{h({\mathfrak p})}_{{\mathfrak p}R_{\mathfrak p}}(H^c_{IR_{\mathfrak p}}(M_{\mathfrak p})) \to H^{\dim(M_{\mathfrak p})}_{{\mathfrak p}R_{\mathfrak p}}(M_{\mathfrak p})
\]
is an isomorphism and $H^i_{{\mathfrak p}R_{\mathfrak p}}(H^c_{IR_{\mathfrak p}}(M_{\mathfrak p}))= 0$ for all $i\neq h({\mathfrak p})=\dim(M_{\mathfrak p})- c .$
\item[(c)] For all ${\mathfrak p}\in V(I)\cap \Supp_R(M)$  the natural homomorphism
\[
\Ext^{h({\mathfrak p})}_{R_{\mathfrak p}}(k({\mathfrak p}),H^c_{IR_{\mathfrak p}}(M_{\mathfrak p}))\rightarrow \Ext^{\dim(M_{\mathfrak p})}_{R_{\mathfrak p}}(k({\mathfrak p}), M_{\mathfrak p})
\]
is an isomorphism and $\Ext^i_{R_{\mathfrak p}}(k({\mathfrak p}), H^c_{IR_{\mathfrak p}}(M_{\mathfrak p}))= 0$ for all $i\neq h({\mathfrak p})=\dim(M_{\mathfrak p})- c$.
\item[(d)] For all ${\mathfrak p}\in V(I)\cap \Supp_R(M)$ the natural homomorphism
\[
K({\hat{M}}_{\mathfrak p})\rightarrow \Ext^{c}_{{\hat{R}}_{\mathfrak p}}(H^c_{I{{\hat{R}}_{\mathfrak p}}}({\hat{M}}_{\mathfrak p}), K({\hat{R}}_{\mathfrak p}))
\]
is an isomorphism and $\Ext^{i}_{{\hat{R}}_{\mathfrak p}}(H^c_{I{{\hat{R}}_{\mathfrak p}}}({\hat{M}}_{\mathfrak p}), K({\hat{R}}_{\mathfrak p}))= 0$ for all $i\neq h({\mathfrak p})=\dim(M_{\mathfrak p})- c.$
\item[(e)] $H^i_I(R)= 0$ for all $i\neq c,$ that is $I$ is a cohomologically complete intersection.
\end{itemize}
\end{thm}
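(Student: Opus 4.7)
The plan is to reduce all conditions to local statements at each prime ${\mathfrak p}\in V(I)\cap \Supp_R(M)$, and to exploit the fact that $M_{\mathfrak p}$ is a maximal Cohen--Macaulay $R_{\mathfrak p}$-module of finite injective dimension. The structural observation driving the whole argument is that, after completion, $\hat{M}_{\mathfrak p}\cong K(\hat{R}_{\mathfrak p})^{n}$ for some $n\geq 1$. One sees this because $\Hom_{\hat{R}_{\mathfrak p}}(\hat{M}_{\mathfrak p},K(\hat{R}_{\mathfrak p}))$ is MCM (Ischebeck-type vanishing) and has finite projective dimension (by Foxby-type dualisation of finite injective dimension against the canonical module), hence is free by Auslander--Buchsbaum; reflexivity of MCM modules with respect to $K(\hat{R}_{\mathfrak p})$ then yields the decomposition. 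Thus $\hat{M}_{\mathfrak p}$ behaves exactly like a canonical module, which is what is needed to adapt the Gorenstein arguments of Hellus--Schenzel.

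For (a) $\Leftrightarrow$ (b), I would invoke the Grothendieck composition-of-functors spectral sequence
\[
E_{2}^{p,q}=H^{p}_{{\mathfrak p}R_{\mathfrak p}}\bigl(H^{q}_{IR_{\mathfrak p}}(M_{\mathfrak p})\bigr)\Longrightarrow H^{p+q}_{{\mathfrak p}R_{\mathfrak p}}(M_{\mathfrak p}),
\]
whose abutment is concentrated in degree $\dim M_{\mathfrak p}$ since $M_{\mathfrak p}$ is MCM. Under (a) the $E_{2}$-page is concentrated on the column $q=c$, so the spectral sequence degenerates and produces both the edge isomorphism and the vanishings asserted in (b). Conversely, (b) combined with Noetherian induction on the primes of $V(I)\cap \Supp_{R}(M)$ ordered by containment (base case: primes minimal in $V(I)\cap \Supp_{R}M$, where $c=\dim M_{\mathfrak p}$) propagates the vanishing back from the $E_{2}$-page to $H^{q}_{IR_{\mathfrak p}}(M_{\mathfrak p})=0$ for $q\neq c$, giving (a).

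The equivalences (b) $\Leftrightarrow$ (c) and (c) $\Leftrightarrow$ (d) are then successive applications of local duality at ${\mathfrak p}R_{\mathfrak p}$. For (b) $\Leftrightarrow$ (c), the quasi-isomorphism $\mathrm{R}\Hom_{R_{\mathfrak p}}(k({\mathfrak p}),N)\simeq \mathrm{R}\Hom_{R_{\mathfrak p}}(k({\mathfrak p}),\mathrm{R}\Gamma_{{\mathfrak p}R_{\mathfrak p}}(N))$ converts the concentration of local cohomology in (b) into the concentration of Ext in (c); the natural map of (c) is identified with the socle of the map of (b), and a map of Artinian modules is an isomorphism iff its restriction to socles is. For (c) $\Leftrightarrow$ (d), pass to $\hat{R}_{\mathfrak p}$ and apply Matlis duality, so that $\Ext^{i}_{\hat{R}_{\mathfrak p}}(k({\mathfrak p}),-)$ is exchanged with $\Ext^{i}_{\hat{R}_{\mathfrak p}}(-,K(\hat{R}_{\mathfrak p}))$ acting on the relevant completed modules; the identification $\hat{M}_{\mathfrak p}\cong K(\hat{R}_{\mathfrak p})^{n}$ together with MCM biduality against $K(\hat{R}_{\mathfrak p})$ then supplies the isomorphism claimed in (d).

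The genuine new content, and the main obstacle, is the equivalence (a) $\Leftrightarrow$ (e) transferring the vanishing from $M$ to $R$ without a Gorenstein hypothesis. Using the decomposition $\hat{M}_{\mathfrak p}\cong K(\hat{R}_{\mathfrak p})^{n}$ and the faithful flatness of $R\to\hat{R}$, one reduces to the statement that $H^{i}_{I}(K_{R})=0$ for $i\neq c$ if and only if $H^{i}_{I}(R)=0$ for $i\neq c$, where $K_{R}$ denotes a canonical module of $R$. I would establish this as a preparatory lemma by writing $\hat{R}$ as a quotient of a complete Gorenstein local ring $S$, so that $K_{\hat{R}}=\Ext^{t}_{S}(\hat{R},S)$ with $t=\dim S-\dim R$, and then combining change-of-rings identifications of local cohomology between $\hat{R}$ and $S$ with the Gorenstein version (Hellus--Schenzel) of the theorem applied over $S$. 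Carrying out this transfer cleanly, while tracking the compatibility of the canonical isomorphisms appearing in (b)--(d) with the reflexivity identifications, is the step that requires the most care.
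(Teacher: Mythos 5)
The main gap is in your passage from (c) back to (b). Your argument rests on the claim that a map of Artinian modules is an isomorphism if and only if its restriction to socles is; that is false: the inclusion $\operatorname{soc}(E)=k\hookrightarrow E$ of the socle of the injective hull is an isomorphism on socles but is not surjective. Worse, the modules appearing in (b), namely $H^i_{{\mathfrak p}R_{\mathfrak p}}(H^c_{IR_{\mathfrak p}}(M_{\mathfrak p}))$, are local cohomologies of a module that is in general not finitely generated, so no Artinianness is available in the first place; and a socle comparison cannot detect the vanishing of $H^i_{{\mathfrak p}R_{\mathfrak p}}(H^c_{IR_{\mathfrak p}}(M_{\mathfrak p}))$ for $i>h({\mathfrak p})$, which is part of (b) but invisible to $\Hom_{R_{\mathfrak p}}(k({\mathfrak p}),\cdot)$. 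What the Ext-conditions in (c) give you directly is only the vanishing of $H^i_{{\mathfrak p}R_{\mathfrak p}}$ for $i<h({\mathfrak p})$ and an isomorphism on socles in degree $h({\mathfrak p})$ (Proposition \ref{2.12}). The paper closes exactly this gap with Lemma \ref{w4}: exactness of ${\rm R}\Hom(k,C^{\cdot}_M(I))$ is propagated by dévissage along the exact sequences $0\to \mathfrak m^{\alpha}/\mathfrak m^{\alpha+1}\to R/\mathfrak m^{\alpha+1}\to R/\mathfrak m^{\alpha}\to 0$ to exactness of ${\rm R}\Hom(R/\mathfrak m^{\alpha},C^{\cdot}_M(I))$ for all $\alpha$, and then the direct limit yields exactness of ${\rm R}\Gamma_{\mathfrak m}(C^{\cdot}_M(I))$, i.e.\ condition (b). Since your route back to (a) runs through (b), this is not a cosmetic point: without a substitute for Lemma \ref{w4} your proof of (c)$\Rightarrow$(a) and (d)$\Rightarrow$(a) does not go through.

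Two further soft spots. For (c)$\Leftrightarrow$(d): Matlis duality does not exchange $\Ext^i_{{\hat R}_{\mathfrak p}}(k({\mathfrak p}),\cdot)$ with $\Ext^i_{{\hat R}_{\mathfrak p}}(\cdot,K({\hat R}_{\mathfrak p}))$; the duality actually needed is local duality over the Cohen--Macaulay ring valid for arbitrary (not necessarily finitely generated) modules, $\Hom_R(H^i_{\mathfrak m}(X),E)\cong\Ext^{n-i}_R(X,K({\hat R}))$ (Lemma \ref{3.1}), which links (b) to (d) directly and is how the paper argues. For (a)$\Leftrightarrow$(e): your reduction to comparing $H^i_I(K({\hat R}))$ with $H^i_I({\hat R})$ is sound, but the proposed proof via Hellus--Schenzel over a Gorenstein presentation $S$ is not carried out, and it is unclear how a theorem about $H^i_J(S)$ would compare local cohomologies of the two ${\hat R}$-modules ${\hat R}$ and $K({\hat R})$; the clean argument is that ${\hat M}\cong K({\hat R})^n$ and $\Ann K({\hat R})=0$ force $\Supp_R M=\Spec R$, whence $\grade(I,M)=\grade(I)$ and $\cd(I,M)=\cd(I)$ by the support comparison of Proposition \ref{2.5}, i.e.\ Proposition \ref{2.7}. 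By contrast, your use of the composite-functor spectral sequence $H^p_{{\mathfrak p}R_{\mathfrak p}}(H^q_{IR_{\mathfrak p}}(M_{\mathfrak p}))\Rightarrow H^{p+q}_{{\mathfrak p}R_{\mathfrak p}}(M_{\mathfrak p})$ with Noetherian induction on primes is a legitimate replacement for the paper's truncation-complex argument for (a)$\Leftrightarrow$(b), provided you identify the natural homomorphism of (b) with the edge map; that part of the proposal is fine.
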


Note that the natural homomorphisms, in $(b),(c)$, and $(d)$ of the above Theorem, can be constructed by truncation complex (see Definition \ref{ws} and Theorem \ref{4.2}).  Moreover $K({\hat{M}}_{\mathfrak p})$ denotes the canonical module of ${\hat{M}}_{\mathfrak p}$, see \cite{pet3} for its definition and basic properties.

In the case of $M = R$ a local Gorenstein ring the equivalence of the
conditions $(a),(b),(c),(d)$ was shown by Hellus and Schenzel as the
main result of their paper (see \cite[Theorem 0.1]{pet1}). The new point of view here
is the generalization to any maximal Cohen-Macaulay module of finite injective dimension. This implies a large bunch of new necessary conditions for an ideal to become set-theoretically a complete intersection.

As a byproduct of our investigations there is another characterization of a Cohen-Macaulay ring. H. Bass conjectured in his paper (see \cite{hum}) that if a Noetherian local ring $(R,\mathfrak m)$ possesses a non-zero finitely generated $R$-module $M$  such that its injective dimension $\id_R(M)$ is finite then $R$ is Cohen-Macaulay ring.  This was proved by M. Hochster (see \cite{ho}) in the equicharacteristic case and finally by P. Roberts (see \cite{r}). We add here the following characterization of a local Cohen-Macaulay ring.

\begin{thm}
Let $(R,\mathfrak m)$ be a Noetherian local ring, $I$ be an ideal of $R$, and $M\neq 0$ be a finitely generated $R$-module. Suppose that $H^i_I(M)= 0$ for all $i\neq c= \grade(I,M)$, then the following are equivalent:
\begin{itemize}
\item[(1)] $\id_R(M)< \infty$.
\item[(2)]  $\id_R(H^c_I(M))< \infty$.
\end{itemize}
If one of the equivalent conditions is satisfied then $R$ is a Cohen-Macaulay ring.
\end{thm}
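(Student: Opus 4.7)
The plan is to exploit the fact that the hypothesis $H^i_I(M)=0$ for $i\neq c$ forces a quasi-isomorphism ${\rm R}\Gamma_I(M)\simeq H^c_I(M)[-c]$ in $D(R)$, since ${\rm R}\Gamma_I(M)$ then has cohomology concentrated in a single degree (this is essentially the collapse of the truncation complex of Definition~\ref{ws}). Combined with the standard fact that the natural map ${\rm R}\Gamma_I(X)\to X$ is a quasi-isomorphism whenever $X$ has cohomological support in $V(I)$, one obtains the comparison
$$\Ext^i_R(X,M)\cong \Ext^{i-c}_R(X,H^c_I(M))$$
for every such $X$. I would record this identity first; it is the technical engine driving the proof.

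For $(1)\Rightarrow (2)$, I would apply the comparison at every prime $\mathfrak p$, after localising the hypothesis (valid since $H^i_I(M)_{\mathfrak p}=H^i_{IR_{\mathfrak p}}(M_{\mathfrak p})$). Taking $X=k(\mathfrak p)$ yields, for $\mathfrak p\in V(I)$, the Bass-number shift $\mu^i(\mathfrak p,H^c_I(M))=\mu^{i+c}(\mathfrak p,M)$, while for $\mathfrak p\notin V(I)$ the module $H^c_I(M)_{\mathfrak p}$ vanishes and all corresponding Bass numbers are zero. Using the characterisation $\id_R(N)=\sup\{i:\mu^i(\mathfrak p,N)\neq 0\text{ for some }\mathfrak p\}$ one then concludes $\id_R(H^c_I(M))\leq \id_R(M)-c<\infty$.

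For $(2)\Rightarrow (1)$, I would apply the comparison with $X=k$ (noting $I\subseteq\mathfrak m$, so $\Supp(k)\subseteq V(I)$) to obtain $\Ext^i_R(k,M)\cong \Ext^{i-c}_R(k,H^c_I(M))$, which vanishes once $i>c+\id_R(H^c_I(M))$. Because $M$ is finitely generated, the classical identity $\id_R(M)=\sup\{i:\Ext^i_R(k,M)\neq 0\}$ forces $\id_R(M)<\infty$. For the final assertion, either equivalent condition produces a nonzero finitely generated $R$-module of finite injective dimension, whence the Bass conjecture, proved by Peskine-Szpiro, Hochster and P. Roberts as cited in the introduction, immediately gives that $R$ is Cohen-Macaulay.

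The main delicate point I anticipate is the first step: one must check carefully that the derived quasi-isomorphism ${\rm R}\Gamma_I(M)\simeq H^c_I(M)[-c]$ and the adjunction with complexes supported in $V(I)$ localise correctly at an arbitrary prime, so that the Bass-number comparison holds pointwise and not merely at $\mathfrak m$. After this essentially formal verification, the two implications reduce to standard bookkeeping with Bass numbers, and the Cohen-Macaulay conclusion is a direct appeal to the resolved Bass conjecture.
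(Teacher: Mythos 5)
Your proposal is correct, and its engine is exactly the paper's: $\Gamma_I(E^{\cdot}_R(M))$ is a complex of injective modules quasi-isomorphic to $H^c_I(M)[-c]$, and since torsion test modules satisfy $\Hom_R(k,\Gamma_I(E))=\Hom_R(k,E)$, one gets $\Ext^{i-c}_R(k,H^c_I(M))\cong\Ext^i_R(k,M)$; your $(2)\Rightarrow(1)$ (vanishing of these Ext modules for $i\gg 0$ plus the finitely generated criterion $\id_R(M)=\sup\{i:\Ext^i_R(k,M)\neq 0\}$) and the final appeal to the Bass conjecture via Roberts coincide with the paper's argument. The only divergence is in $(1)\Rightarrow(2)$: you localize at every prime and compare Bass numbers, using that $H^c_I(M)$ is $I$-torsion so $\mu^i(\mathfrak p,H^c_I(M))=0$ off $V(I)$ and $\mu^i(\mathfrak p,H^c_I(M))=\mu^{i+c}(\mathfrak p,M)$ on $V(I)$, whereas the paper argues in one line that if $\id_R(M)<\infty$ then $\Gamma_I(E^{\cdot}_R(M))$ is itself a finite complex of injectives (it vanishes in degrees $<c$ because $\grade(I,M)=c$), hence a bounded injective resolution of $H^c_I(M)[-c]$, giving $\id_R(H^c_I(M))\le \id_R(M)-c$ with no localization at all. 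Your detour is sound — local cohomology commutes with localization and the Bass-number characterization of injective dimension holds for arbitrary modules — and it recovers the same quantitative bound, but it is more work than the paper's direct observation; the delicate point you flag (that the derived adjunction for torsion objects localizes) is indeed the only thing to check, and it poses no obstruction.
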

Note that Zargar and Zakeri define a relative Cohen-Macaulay $R$-module $M\neq 0$ with respect to the ideal $I\subseteq R$ which is actually equivalent to the fact that $H^i_I(M)=0$ for all $i\neq c= \grade (I,M).$ They have shown that if $M$ is a relative Cohen-Macaulay $R$-module with respect to $I$, then $\id_R (H^c_I(M))=\id_R (M)- c$ (see \cite[Theorem 2.6]{z}). Moreover recently in \cite[Theorem 2.9]{z1} Zargar has proved the above Theorem \ref{4} incase of $M$ a maximal Cohen-Macaulay $R$-module which is relative Cohen-Macaulay in $\Supp_R(M/aM) \setminus \{\mathfrak{m}\}$.

The paper is organized as follows. In the Section 2 we recall a few preliminaries
used in the sequel of the paper. In Section 3 we make a comment to the Local Duality
for a Cohen-Macaulay ring. In Section 4 we describe the truncation complex as
it was introduced by Hellus and Schenzel  (see \cite[Definition 2.1]{pet1}) and use it for the proofs of our main results. In Section 5 we conclude with a few applications.

\section{Preliminaries}
In this section we will fix the notation of the paper and summarize a few preliminaries and auxiliary results.
We always assume that $(R,\mathfrak m)$ is a local commutative Noetherian ring with $\mathfrak m$ as a maximal ideal and $k= R/{\mathfrak m}$ denotes the residue field. Furthermore $E= E_R(k)$ denotes the injective hull of $k$.

Let $I\subseteq R$ be an ideal of $R$. For an $R$-module $M$ let $H^i_I(M)$, $i\in \mathbb Z$ denote the local cohomology modules of $M$ with respect to $I$ (see  \cite{goth} for its definition).
We define the grade and the cohomological dimension
\[
\grade(I,M)= \inf\{i\in \mathbb Z: H^i_I(M)\neq 0\} \text{ and }
\cd(I,M)= \sup\{i\in \mathbb Z: H^i_I(M)\neq 0\}.
\]
For a finitely generated $R$-module $M$ this notion of grade coincides with the usual one on the
maximal length of an $M$-regular sequence contained in $I$.
For any $R$-module $X$, $\id_R(X)$ stands for the injective dimension of $X$.

Moreover, we define $\height_M I = \height I R/\Ann_R M.$ Then
$\grade(I,M)\leq \height_M I \leq \cd(I,M).$ In the case of $M = R$
in addition it follows that $\height I \leq \ara I \leq \cd I.$ Furthermore
for a Cohen-Macaulay $R$-module $M$ it turns out that $\height_M I = \dim M -\dim M/IM$
for any ideal $I \subset R.$ For a Cohen-Macaulay $R$-module $M$ it is
clear that $\height_M I = \grade(I,M).$

\begin{rem}\label{1}
Let $\underline{x}= x_1, \ldots ,x_r\in I$ denote a system of elements of $R$ such that $\Rad I= \Rad(\underline{x})R$.
We consider the \v{C}ech complex $\Check{C}_{\underline{x}}$ of $\underline{x}$ with respect to $\underline{x}= x_1,...,x_r$. That is
\[
\Check{C}_{\underline{x}}= \mathop\otimes\limits_{i}^r \Check{C}_{x_i},
\]
where $\Check{C}_{x_i}$ is the complex $0\rightarrow R\rightarrow R_{x_i}\rightarrow 0$. Then $\Check{C}_{\underline{x}}$ has the following form
\[
0\rightarrow R\rightarrow \mathop\oplus\limits_{i=1}^r R_{x_i}\rightarrow ...\rightarrow R_{{x_1}{x_2}...{x_r}}\rightarrow 0.
\]
By $\Check{D}_{\underline{x}}$ we denote the truncation of $\Check{C}_{\underline{x}}$ by $R$. That is, we have  $\Check{D}_{\underline{x}}^i = \Check{C}_{\underline{x}}^i$ for all $i \not=0$ and $\Check{D}_{\underline{x}}^0 = 0.$

So there is a short exact sequence of complexes of flat $R$-modules
\[
0\rightarrow \Check{D}_{\underline{x}}\rightarrow \Check{C}_{\underline{x}}\rightarrow R\rightarrow 0,
\]
where $\Check{C}_{\underline{x}}\rightarrow R$ is the identity in homological degree zero.

For an arbitrary complex of $R$-module $X$ it follows (see \cite[Theorem 1.1]{pet2}) that
\[
H^i(\Check{C}_{\underline{x}}\otimes_R X)\cong H^i_I(X) \text{ for all }
i\in \mathbb Z.
\]
\end{rem}

Now let us summarize a few well-known facts about $\grade$ and  local cohomology. For basic notions on $\grade$ as well as other notions of commutative algebra we refer to Matsumura's textbook (see \cite{mat}). For the facts on homological algebra needed in this paper see \cite{w}. Also we denote ${\hat{R}}$ for the completion of $R$ with respect to the maximal ideal.

\begin{prop}\label{2.5}
For finitely generated $R$-modules $M$, $N$ and $I\subseteq R$ be an ideal we have
\begin{itemize}
\item[(a)] $\grade(I,M)= \inf\{\depth(M_{\mathfrak p}):$ ${\mathfrak p}\in \Supp_R(M)\cap V(I)\}$.
\item[(b)] Suppose that $\Supp_R(N)\subseteq \Supp_R(M)$, then $\cd(I,N)\leq \cd(I,M)$.
\end{itemize}
In particular $\cd(I,M)\leq \cd(I)$.
\end{prop}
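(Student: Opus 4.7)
My plan is to treat parts (a) and (b) separately by routine homological techniques; the concluding bound $\cd(I,M) \leq \cd(I)$ will then follow from (b) as the special case $N := M$ with ambient module $R$, since $\Supp_R(M) \subseteq \Supp_R(R) = \Spec R$.

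For part (a), I would prove the two inequalities in turn. To see $\grade(I,M) \leq \depth(M_{\mathfrak p})$ at any $\mathfrak p \in V(I) \cap \Supp_R(M)$, observe that any $M$-regular sequence $x_1, \ldots, x_n \in I$ localizes (by flatness of $R \to R_{\mathfrak p}$) to an $M_{\mathfrak p}$-regular sequence contained in $\mathfrak p R_{\mathfrak p}$, since $I \subseteq \mathfrak p$. Passing to the supremum over such sequences gives the desired inequality. For the reverse inequality, I would induct on $c = \grade(I,M)$. If $c = 0$, the set of $M$-zero-divisors covers $I$, so by prime avoidance $I \subseteq \mathfrak p$ for some $\mathfrak p \in \operatorname{Ass}_R(M) \subseteq \Supp_R(M)$, and then $\depth(M_{\mathfrak p}) = 0$ because associated primes commute with localization. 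If $c > 0$, choose $x \in I$ which is $M$-regular; then $\grade(I, M/xM) = c - 1$, and by induction there exists $\mathfrak p \in V(I) \cap \Supp_R(M/xM) \subseteq V(I) \cap \Supp_R(M)$ with $\depth((M/xM)_{\mathfrak p}) = c - 1$. Since $x$ remains $M_{\mathfrak p}$-regular and $(M/xM)_{\mathfrak p} = M_{\mathfrak p}/xM_{\mathfrak p}$, we conclude $\depth(M_{\mathfrak p}) = c$.

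For part (b), the approach is to invoke Gruson's lemma: because both modules are finitely generated and $\Supp_R(N) \subseteq \Supp_R(M)$, there is a finite filtration $0 = N_0 \subset N_1 \subset \cdots \subset N_t = N$ whose successive quotients $N_j/N_{j-1}$ are quotients of finite direct sums $M^{\oplus k_j}$. Chasing the long exact sequences of local cohomology attached to this filtration and to the presentations $0 \to K_j \to M^{\oplus k_j} \to N_j/N_{j-1} \to 0$ propagates the vanishing $H^i_I(M) = 0$ for $i > \cd(I,M)$ to $N$, yielding $\cd(I,N) \leq \cd(I,M)$.

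The main obstacle is closing the induction in (b), since each kernel $K_j$ again has support in $\Supp_R(M)$ and is not obviously simpler than $N$. The cleanest way around this is a descending induction on the cohomological degree $i$, anchored by Grothendieck's vanishing theorem (local cohomology of a finitely generated module vanishes above $\dim R$): once high-degree vanishing is in hand, the long exact sequences feed the vanishing back down through the range $i > \cd(I,M)$. Part (a) is entirely standard and should present no serious difficulty.
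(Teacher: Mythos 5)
Your proposal is correct, but note that the paper does not actually prove this proposition: it simply cites Bruns--Herzog, Proposition 1.2.10 for part (a) and Divaani-Aazar--Naghipour--Tousi, Theorem 2.2 for part (b). What you have written is, in effect, a reconstruction of the proofs behind those citations. Your argument for (a) -- localizing a maximal $M$-regular sequence in $I$ (using that the sequence lies in $\mathfrak p$ and $M_{\mathfrak p}\neq 0$, so Nakayama keeps the quotient nonzero), together with induction on $\grade(I,M)$ using prime avoidance over $\operatorname{Ass}_R(M)$ in the base case and the depth-drop formula $\depth(M_{\mathfrak p})=\depth(M_{\mathfrak p}/xM_{\mathfrak p})+1$ in the inductive step -- is the standard textbook proof. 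Your argument for (b) is exactly the published one: Gruson's filtration theorem reduces to factors that are quotients of finite direct sums of copies of $M$, and the kernels are handled by a descending induction on the cohomological degree anchored at Grothendieck's vanishing theorem; you correctly identified that the induction must be run simultaneously for the whole class of finitely generated modules with support in $\Supp_R(M)$, so that it applies to the kernels $K_j$. Two minor points worth making explicit if you write this up: the degenerate case $IM=M$ (where $\Supp_R(M)\cap V(I)=\emptyset$ and both sides of (a) are $+\infty$), and the fact that the local-cohomology definition of grade agrees with the regular-sequence one for finitely generated modules, which your step $\grade(I,M/xM)=c-1$ and the $c=0$ case quietly use (the paper records this equivalence in Section 2). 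Neither is a real gap.
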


\begin{proof}
The statement $(a)$ is shown in \cite[Proposition 1.2.10]{her}. For the proof of $(b)$ we refer to \cite[Theorem 2.2]{k}.
\end{proof}

In the context of our paper we are interested in Cohen-Macaulay rings and modules. A non-zero finitely generated $R$-module $M$ is called maximal Cohen-Macaulay module
if $\depth(M)= \dim(R).$

\begin{prop}\label{2.7}
Let $(R,\mathfrak m)$ be a Cohen-Macaulay ring of dimension $n$ and $I\subseteq R$ be an ideal. For a maximal Cohen-Macaulay module $M\neq 0$ we consider the following conditions:
\begin{itemize}
\item[(a)] $\grade(I)= \cd(I)$ and
\item[(b)] $\grade(I,M)= \cd(I,M)$.
\end{itemize}
Then $(a)$ implies $(b)$, while the converse is also true provided that $\Supp_R(M)= \Spec(R)$.
\end{prop}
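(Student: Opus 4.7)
The plan is to sandwich both invariants attached to $M$ between the corresponding invariants for $R$ by establishing the chain
\[
\grade(I) \;\leq\; \grade(I,M) \;\leq\; \cd(I,M) \;\leq\; \cd(I),
\]
and then to read off both implications from how this chain collapses under the respective hypotheses. The middle inequality is simply the definition of grade and cohomological dimension. The rightmost inequality follows from Proposition \ref{2.5}(b): since $\Supp_R(M)\subseteq\Spec(R)=\Supp_R(R)$, we get $\cd(I,M)\leq \cd(I,R)=\cd(I)$.

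For the leftmost inequality I would use Proposition \ref{2.5}(a) together with the hypothesis that $M$ is maximal Cohen-Macaulay over the Cohen-Macaulay ring $R$. For each ${\mathfrak p}\in\Supp_R(M)$, the localization $M_{\mathfrak p}$ is maximal Cohen-Macaulay over $R_{\mathfrak p}$, so $\depth M_{\mathfrak p}=\dim R_{\mathfrak p}=\depth R_{\mathfrak p}$. Hence
\[
\grade(I,M)=\inf\{\depth R_{\mathfrak p}\,:\,{\mathfrak p}\in\Supp_R(M)\cap V(I)\},
\]
while $\grade(I)$ is the infimum of the same quantities over all of $V(I)$, so the former runs over a subset of the latter, giving $\grade(I)\leq \grade(I,M)$. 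This proves the chain.

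Now $(a)\Rightarrow(b)$ is immediate: if $\grade(I)=\cd(I)$, then all four terms coincide, yielding $\grade(I,M)=\cd(I,M)$. For the converse under $\Supp_R(M)=\Spec(R)$, I would upgrade both outer inequalities to equalities: the two infima above are now over the same set $V(I)$, so $\grade(I)=\grade(I,M)$; and applying Proposition \ref{2.5}(b) with the roles of $M$ and $R$ reversed (which is legitimate precisely because $\Supp_R(R)\subseteq\Supp_R(M)$) yields $\cd(I)\leq \cd(I,M)$, hence $\cd(I,M)=\cd(I)$. Feeding $(b)$ into the now-collapsed chain produces $\grade(I)=\cd(I)$.

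No substantial obstacle is expected; the argument is really bookkeeping with the two parts of Proposition \ref{2.5} and the MCM identification of depth with dimension. The only subtle point is that the full-support hypothesis is used twice in the converse direction, once to equate the grades via Proposition \ref{2.5}(a) and once (via the opposite support inclusion) to equate the cohomological dimensions via Proposition \ref{2.5}(b); a proof that only invokes one of these equalities would be incomplete.
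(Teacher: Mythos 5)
Your proposal is correct and follows essentially the same route as the paper: both establish the chain $\grade(I)\leq\grade(I,M)\leq\cd(I,M)\leq\cd(I)$ using $\depth(M_{\mathfrak p})=\depth(R_{\mathfrak p})=\dim(R_{\mathfrak p})$ for ${\mathfrak p}\in\Supp_R(M)$ together with Proposition \ref{2.5}, and then observe that under $\Supp_R(M)=\Spec(R)$ the outer inequalities become equalities, giving the converse. Your write-up merely makes explicit the bookkeeping that the paper leaves terse.
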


\begin{proof}
Let $\mathfrak{p}\in \Supp_R(M)$. Because of $\depth(M_\mathfrak p)= \depth(R_\mathfrak p)= \dim(R_\mathfrak p)$ (recall that $M$ is maximal Cohen-Macaulay module) it follows that
\[
\grade(I)\leq \grade(I,M)\leq \cd(I,M)\leq \cd(I)
\]
So $(a)$ implies $(b)$. If $\Supp_R(M)= \Spec(R)$ we have that
$\grade(I)= \grade(I,M)$ and $\cd(I,M)= \cd(I)$ (see Proposition \ref{2.5}) and the converse holds also.
\end{proof}


\begin{notation}
As usual we use the symbol `` $\cong$ " in order to denote an isomorphism of modules. In contrast to that we use the symbol `` $\qism$ " in the following context:

Let $X\rightarrow Y$ be a morphism of complexes such that it induces an isomorphism in cohomologies, i.e. a quasi-isomorphism. Then we write $X \qism Y$. That is `` $\qism$ " indicates that there is a morphism of complexes in the right direction.

Moreover if $X \qism Y$ is a quasi-isomorphism and $F^{\cdot}_R$ is a complex of flat $R$-modules bounded above. Then it induces a quasi-isomorphism $F^{\cdot}_R\otimes_{R} X \qism F^{\cdot}_R\otimes_{R} Y$. Similar results are true for $\Hom_R(., E^{\cdot}_R)$ respectively $\Hom_R( P^{\cdot}_R, .)$ for $E^{\cdot}_R$ a bounded below complex of injective $R$-modules respectively $P^{\cdot}_R$ a bounded above complex of projective $R$-modules. For details we refer to \cite{har1}. This is the only fact we need of the theory of derived functors.
\end{notation}

\begin{lem}\label{4.3}
Let $(R,\mathfrak m)$ be a ring of dimension $n.$ Let $X$ be a complex of $R$-modules such that $\Supp_R(H^i(X))\subseteq V({\mathfrak m})$ for all $i\in \mathbb Z.$ Then
$H^i_{\mathfrak m} (X)\cong H^i(X)$ for all $i\in \mathbb Z$.
\end{lem}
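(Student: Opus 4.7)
The plan is to compute local cohomology with respect to $\mathfrak m$ via the \v{C}ech complex and reduce the statement to an acyclicity claim. Choose a system of parameters $\underline{x}=x_1,\ldots,x_r$ for $\mathfrak m$, so that $\Rad(\underline{x})R=\mathfrak m$. By Remark \ref{1}, $H^i_{\mathfrak m}(X)\cong H^i(\check{C}_{\underline{x}}\otimes_R X)$. The short exact sequence $0\to \check{D}_{\underline{x}}\to \check{C}_{\underline{x}}\to R\to 0$ from Remark \ref{1} consists of flat $R$-modules, so tensoring with $X$ preserves exactness, yielding
\[
0\to \check{D}_{\underline{x}}\otimes_R X\to \check{C}_{\underline{x}}\otimes_R X\to X\to 0.
\]
Passing to the associated long exact cohomology sequence, it then suffices to prove that $\check{D}_{\underline{x}}\otimes_R X$ is acyclic.

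The crucial module-theoretic input is that any $R$-module $M$ with $\Supp_R(M)\subseteq V(\mathfrak m)=\{\mathfrak m\}$ is $\mathfrak m$-torsion, and consequently $M_x=0$ for every $x\in\mathfrak m$. To see this, fix $m\in M$ and note that the cyclic submodule $Rm$ is finitely generated with support contained in $\{\mathfrak m\}$, so either $Rm=0$ or $\sqrt{\Ann_R(Rm)}=\mathfrak m$; in either case $\mathfrak m^k m=0$ for some $k$, and in particular $x^k m=0$, which forces $m/1=0$ in $M_x$.

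To conclude, I would invoke the second spectral sequence of the double complex $\check{D}_{\underline{x}}\otimes_R X$, namely
\[
E_2^{p,q}=H^p\bigl(\check{D}_{\underline{x}}\otimes_R H^q(X)\bigr)\Rightarrow H^{p+q}\bigl(\check{D}_{\underline{x}}\otimes_R X\bigr).
\]
Each term $\check{D}_{\underline{x}}^{\,p}$ in positive degree is a direct sum of localizations of $R$ at products of the $x_i\in\mathfrak m$, while $\check{D}_{\underline{x}}^{0}=0$ by construction. Since $H^q(X)$ is $\mathfrak m$-torsion by the hypothesis on supports, the previous paragraph gives $\check{D}_{\underline{x}}^{\,p}\otimes_R H^q(X)=0$ for all $p\geq 1$. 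Therefore $E_2=0$ identically, so $\check{D}_{\underline{x}}\otimes_R X$ is acyclic, and the long exact sequence collapses to give $H^i_{\mathfrak m}(X)\cong H^i(X)$ for all $i\in \mathbb Z$.

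The main obstacle I foresee is convergence of the hypercohomology spectral sequence when $X$ is unbounded; however, because $\check{D}_{\underline{x}}$ is bounded (with only finitely many nonzero terms), the induced filtration on the total complex is finite in the $p$-direction, and convergence is automatic. The secondary technical point is the support-to-torsion reduction in the second paragraph, but the standard cyclic-submodule argument dispatches it cleanly.
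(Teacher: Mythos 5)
Your proof is correct and follows essentially the same route as the paper: reduce via the short exact sequence $0\to \check{D}_{\underline{x}}\to \check{C}_{\underline{x}}\to R\to 0$ tensored with $X$ to the acyclicity of $\check{D}_{\underline{x}}\otimes_R X$, which holds because every term of $\check{D}_{\underline{x}}$ is a localization at an element of $\mathfrak m$ and the cohomology of $X$ is $\mathfrak m$-torsion. Your spectral-sequence justification (with the finite filtration in the bounded $\check{D}$-direction) simply makes explicit the step the paper states tersely as ``cohomology commutes with exact functors,'' so the arguments coincide in substance.
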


\begin{proof}
Let $\underline{x}= x_1, \ldots ,x_n\in {\mathfrak m}$ denote a system of parameters of
$R$. Then we have the following short exact sequence of complexes of flat $R$-modules
\[
0\rightarrow \Check{D}_{\underline{x}}\rightarrow \Check{C}_{\underline{x}}\rightarrow R\rightarrow 0.
\]
Apply the functor $\cdot \otimes_{R} X$ to this sequence and we  get the following long exact sequence of cohomology modules
\[
\cdots \rightarrow H^i( \Check{D}_{\underline{x}}\otimes_{R} X)\rightarrow H^i_{\mathfrak m}(X)\rightarrow H^i(X)\rightarrow \cdots
\]
Now we claim that $\Check{D}_{\underline{x}}$$\otimes_{R} X$ is an exact complex.
This follows because
\[
\mathop\oplus \limits_{j}H^i (R_{x_j}\otimes_{R} X)\cong \mathop\oplus \limits_{j}R_{x_j}\otimes_{R} H^i(X)= 0
\]
This is true since $\Supp_R(H^i(X))\subseteq V({\mathfrak m})$ and cohomology commutes with exact functors. So it proves that
$H^i(X)\cong H^i_{\mathfrak m} (X)$ for all $i\in \mathbb Z,$ as required.
\end{proof}

In the following we need a result that was originally proved by Hellus and Schenzel (see \cite[Proposition 1.4]{pet1}) by a spectral sequence. Here we will give an elementary proof without using spectral sequences.

\begin{prop}\label{2.12}
Let $(R,\mathfrak m)$ be a local ring. Let $X$ be an arbitrary $R$-module. Then for any integer $s\in \mathbb N$ the following conditions are equivalent:
\begin{itemize}
\item[(1)] $H^i_{\mathfrak m} (X)= 0$ for all $i< s$.
\item[(2)] $\Ext^i_R(k,X)= 0$ for all $i< s$.
\end{itemize}
If one of the above conditions holds, then there is an isomorphism
\[
\Hom_R(k,H^s_{\mathfrak m} (X))\cong \Ext^s_R(k, X).
\]
\end{prop}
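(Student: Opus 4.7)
The plan is to induct on $s \in \mathbb{N}$, handling the equivalence and the supplementary isomorphism simultaneously. For the base case $s = 0$ the equivalence is vacuous, and the isomorphism $\Hom_R(k, H^0_{\mathfrak m}(X)) \cong \Hom_R(k, X)$ is automatic because every $R$-linear map $k \to X$ factors through the socle $(0 :_X \mathfrak m) \subseteq \Gamma_{\mathfrak m}(X) = H^0_{\mathfrak m}(X)$, and applying $\Hom_R(k, -)$ to the inclusion $\Gamma_{\mathfrak m}(X) \hookrightarrow X$ gives an equality.

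For the inductive step, passing from $s$ to $s + 1$ (with $s + 1 \geq 1$), I would embed $X$ into an injective module $I$ and set $Y := I/X$. Since $I$ is injective it is acyclic both for $\Gamma_{\mathfrak m}$ and for $\Hom_R(k, -)$, so the two long exact sequences produce degree-shift isomorphisms
\[
H^i_{\mathfrak m}(X) \cong H^{i-1}_{\mathfrak m}(Y), \qquad \Ext^i_R(k, X) \cong \Ext^{i-1}_R(k, Y) \quad \text{for } i \geq 2,
\]
together with four-term exact sequences in the range $i = 0, 1$. Either hypothesis of the proposition at level $s + 1$ forces $\Gamma_{\mathfrak m}(X) = 0$: from (1) this is immediate, and from (2) it follows since any nonzero $\mathfrak m$-torsion module contains a nonzero socle element. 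The plan is to transfer the vanishing hypothesis from $X$ at level $s + 1$ to $Y$ at level $s$ via these shifts, invoke the inductive hypothesis on $Y$, and then reverse the shifts; the isomorphism $\Hom_R(k, H^{s+1}_{\mathfrak m}(X)) \cong \Ext^{s+1}_R(k, X)$ is obtained by composing $\Hom_R(k, H^s_{\mathfrak m}(Y)) \cong \Ext^s_R(k, Y)$ (inductive hypothesis) with the two degree-shift isomorphisms.

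The principal obstacle is to choose $I$ so that the inductive step actually closes. If one picks an arbitrary injective containing $X$, the term $\Gamma_{\mathfrak m}(I)$ may be nonzero, so the four-term tail leaves $\Gamma_{\mathfrak m}(Y)$ with a residual contribution that the inductive hypothesis at level $s$ cannot absorb. The remedy is to take $I := E_R(X)$: once $\Gamma_{\mathfrak m}(X) = 0$, $\mathfrak m$ is not associated to $X$, so by the Matlis structure theorem $E_R(X)$ decomposes as a direct sum of modules $E_R(R/\mathfrak p)$ with $\mathfrak p \neq \mathfrak m$, each of which is $\mathfrak m$-torsion free. Therefore $\Gamma_{\mathfrak m}(I) = 0$, and the four-term sequences simplify to $\Gamma_{\mathfrak m}(Y) \cong H^1_{\mathfrak m}(X)$ and $\Hom_R(k, Y) \cong \Ext^1_R(k, X)$. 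Under the hypothesis at level $s + 1 \geq 2$, both sides of the first isomorphism vanish (resp.\ both sides of the second), giving $\Gamma_{\mathfrak m}(Y) = 0$ and making $Y$ eligible for the inductive hypothesis at level $s$. Everything else is straightforward diagram chasing in the two long exact sequences.
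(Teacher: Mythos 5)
Your proof is correct, but it runs along a different track than the paper's. The paper also inducts on $s$, yet it never changes the module: the inductive hypothesis gives $\Ext^i_R(k,X)=0$ for $i\le s$, hence the Bass numbers $\mu_i(\mathfrak m,X)$ vanish for $i\le s$, so in a \emph{minimal} injective resolution $E^{\cdot}_R(X)$ the complex $\Gamma_{\mathfrak m}(E^{\cdot}_R(X))$ is zero in degrees $\le s$; comparing the left-exact sequences obtained from $\Gamma_{\mathfrak m}(E^{\cdot}_R(X))$ and $\Hom_R(k,E^{\cdot}_R(X))$ then yields $\Hom_R(k,H^{s+1}_{\mathfrak m}(X))\cong \Ext^{s+1}_R(k,X)$ in one stroke. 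You instead do genuine dimension shifting along $0\to X\to E_R(X)\to Y\to 0$ and apply the inductive hypothesis to the cosyzygy $Y$; your crucial observation that one must take the injective hull rather than an arbitrary injective, because $\Gamma_{\mathfrak m}(X)=0$ forces $\Gamma_{\mathfrak m}(E_R(X))=0$, is precisely the degree-zero instance of the Bass-number vanishing that minimality supplies in the paper, so the two arguments are close cousins, yours trading the whole resolution for a one-step-at-a-time induction that changes the module. Two small remarks: $\Gamma_{\mathfrak m}(E_R(X))=0$ follows even more quickly from essentiality alone, since a nonzero $\mathfrak m$-torsion submodule of $E_R(X)$ would meet $X$ in a nonzero submodule of $\Gamma_{\mathfrak m}(X)=0$, so the Matlis decomposition you invoke is not strictly needed; and your composition of the shift isomorphisms with the inductive isomorphism for $Y$ implicitly splits into the cases $s+1=1$ (where the four-term sequences themselves provide the shifts and the base case for $Y$ applies) and $s+1\ge 2$, both of which your setup handles. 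Both proofs are elementary and avoid the spectral sequence of Hellus and Schenzel, which was the paper's stated aim; the paper's version delivers the isomorphism slightly more directly, while yours needs only the hull and long exact sequences.
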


\begin{proof}
We prove the statement by an induction on $s$. First let us consider that $s= 0$.  Because of $\Supp(k)= \{{\mathfrak m}\}= V({\mathfrak m})$  the injection $\Gamma_{\mathfrak m}(X)\subseteq X$ induces an isomorphism
\[
\Hom_R(k,\Gamma_{\mathfrak m}(X))\cong \Hom_R(k, X).
\]
Because of $\Supp(\Gamma_{\mathfrak m}(X))\subseteq V({\mathfrak m})$ it follows that $\Gamma_{\mathfrak m}(X)= 0$ if and only if $\Hom_R(k,\Gamma_{\mathfrak m}(X))= 0$ which proves the claim for $s= 0$.

Now consider $s+1$ and assume that the statement is true  for all $i\leq s$. Since $\Supp(H^{s}_{\mathfrak m}(X))\subseteq V({\mathfrak m})$ it follows that the equivalence of the vanishing of $H^{i}_{\mathfrak m}(X)$ and $\Ext^{i}_R(k, X)$ for all $i\leq s$.

So it remains to prove that
\[
\Hom_R(k,H^{s+1}_{\mathfrak m} (X))\cong \Ext^{s+1}_R(k, X)
\]
To this end let $E^{\cdot}_R(X)$ be a minimal injective resolution of $X$, then
\[
\Hom_R(k, (E^{\cdot}_R(X))^i)= 0
\]
for all $i\leq s$. That is we have the following exact sequence
\[
0\rightarrow H^{s+1}_{\mathfrak m}(X)\rightarrow \Gamma_{\mathfrak m}(E^{\cdot}_R(X))^{s+1}\rightarrow \Gamma_{\mathfrak m}(E^{\cdot}_R(X))^{s+2}
\]
Because of  $\Hom_R(k, E^{\cdot}_R(X))\cong \Hom_R(k, \Gamma_{\mathfrak m}(E^{\cdot}_R(X)))$ it follows that $
\Ext^{s+1}_R(k, X)\cong \Hom_R(k,H^{s+1}_{\mathfrak m}(X))$.
\end{proof}

\begin{prop}\label{2.13}
Let $(R,\mathfrak m)$ be a local ring and $M\neq 0$ be a finitely generated $R$-module. Let
$I\subseteq R$ be an ideal of $R$ with $H^i_I(M)= 0$ for all
$i\neq c$ and $H^c_I(M)\neq 0$. Then $c= \grade(IR_\mathfrak p, M_\mathfrak p)$ for all ${\mathfrak p}\in V(I) \cap \Supp_R M$. If $M$ is in addition a Cohen-Macaulay module then $c= \height_{M_{\mathfrak p}} IR_{\mathfrak p}$ for all ${\mathfrak p}\in V(I) \cap \Supp_R M$.
\end{prop}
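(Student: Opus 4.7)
The plan is to localize the global vanishing hypothesis at each prime $\mathfrak{p}\in V(I)\cap\Supp_R(M)$, bound the localized grade by a finite quantity using Proposition \ref{2.5}(a), and then deduce that the only non-vanishing local cohomology module at $\mathfrak{p}$ is forced to be in degree $c$.

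First I would use that local cohomology commutes with localization, so that $H^i_{IR_\mathfrak{p}}(M_\mathfrak{p})\cong H^i_I(M)_\mathfrak{p}$ for every $i\in\mathbb Z$. The hypothesis $H^i_I(M)=0$ for $i\neq c$ then localizes to $H^i_{IR_\mathfrak{p}}(M_\mathfrak{p})=0$ for all $i\neq c$. Next, since $\mathfrak{p}\in V(I)\cap\Supp_R(M)$ implies $\mathfrak{p}R_\mathfrak{p}\in V(IR_\mathfrak{p})\cap\Supp_{R_\mathfrak{p}}(M_\mathfrak{p})$, Proposition \ref{2.5}(a) applied to the finitely generated module $M_\mathfrak{p}$ over the local ring $R_\mathfrak{p}$ yields
\[
\grade(IR_\mathfrak{p},M_\mathfrak{p})\leq\depth(M_\mathfrak{p})\leq\dim(M_\mathfrak{p})<\infty.
\]
Hence at least one cohomology module $H^i_{IR_\mathfrak{p}}(M_\mathfrak{p})$ must be non-zero; combined with the vanishing for $i\neq c$, this forces $H^c_{IR_\mathfrak{p}}(M_\mathfrak{p})\neq 0$ and therefore $c=\grade(IR_\mathfrak{p},M_\mathfrak{p})$, proving the first assertion.

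For the Cohen-Macaulay addendum, I would use that the Cohen-Macaulay property passes to localizations, so $M_\mathfrak{p}$ is again a Cohen-Macaulay $R_\mathfrak{p}$-module. The equality $\grade(J,N)=\height_N J$ for any Cohen-Macaulay module $N$ (recalled in the preliminaries) then gives $c=\height_{M_\mathfrak{p}}IR_\mathfrak{p}$. The only step that is not purely formal is the finiteness of $\grade(IR_\mathfrak{p},M_\mathfrak{p})$, which one might worry about a priori because grade can be infinite in general, but it follows cleanly from Proposition \ref{2.5}(a) as soon as one notices that $\mathfrak{p}R_\mathfrak{p}$ itself lies in $V(IR_\mathfrak{p})\cap\Supp_{R_\mathfrak{p}}(M_\mathfrak{p})$.
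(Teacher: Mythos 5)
Your argument is correct and follows essentially the same route as the paper: both rest on the fact that local cohomology commutes with localization, so the vanishing $H^i_I(M)=0$ for $i\neq c$ passes to $M_{\mathfrak p}$, and on the characterization of grade as the least non-vanishing degree of local cohomology, with the Cohen--Macaulay addendum handled by the preliminary identity $\grade = \height_M$. The paper phrases this as a contradiction (if $\grade(IR_{\mathfrak p},M_{\mathfrak p})=h>c$ then $H^h_I(M)_{\mathfrak p}\neq 0$), while you argue directly by making the finiteness of $\grade(IR_{\mathfrak p},M_{\mathfrak p})$ explicit via Proposition \ref{2.5}(a) -- a minor, and if anything slightly more careful, variation of the same proof.
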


\begin{proof}  Since $c= \grade(I,M)\leq \grade(IR_\mathfrak p, M_\mathfrak p)$ for all ${\mathfrak p}\in V(I) \cap \Supp_R M$. Suppose that there is a prime ideal $\mathfrak p \in V(I)\cap \Supp_R M$ such that $\grade(IR_\mathfrak p, M_\mathfrak p)= h >c$. Then
\[
0 \not= H^h_{IR_{\mathfrak p}}(M_{\mathfrak p})\cong H^h_{IR_{\mathfrak p}}(M\otimes_{R} R_{\mathfrak p})\cong H^h_I(M)\otimes_{R} R_{\mathfrak p}.
\]
But it implies that $H^h_I(M)\neq 0$, $h> c,$ which is a contradiction to our assumption.
\end{proof}

In case of a not necessarily finitely generated $R$-module $X$ we put $\dim X = \dim \Supp_R X$. Note that this agrees with the usual notion $\dim X = \dim R/\Ann_R X$ if $X$ is finitely generated.

\section{Local Duality Theorem for a  Cohen-Macaulay ring}
We want to prove a variation of the Local Duality Theorem for a Cohen-Macaulay ring.
Let $(R,\mathfrak m)$ denote a local ring which is the factor ring of a
local Gorenstein ring $(S,{\mathfrak n})$ with $\dim(S)=t.$ Let $N$ be a finitely generated $R$-module.Then by the Local Duality Theorem  there is an isomorphism
\[
H^i_{\mathfrak m} (N)\cong \Hom_R(\Ext^{t-i}_S(N, S), E)
\]
for all $i\in \mathbb N$ (see \cite{goth}). Under these circumstances we define
\[
K(N):= \Ext^{t-r}_S(N, S), \dim(N)= r
\]
as the canonical module of $N$. It was introduced by Schenzel (see \cite{pet3}) as
the generalization of the canonical module of a ring (see e.g. \cite{her}).

For our purposes here we need an extension of the Local Duality of a local Gorenstein ring to a local  Cohen-Macaulay ring which is valid also for modules that are not necessarily finitely generated.
A more general result of Lemma \ref{3.1} was proved by Hellus (see \cite[Theorem 6.4.1]{he}). We
include here a proof of the particular case for sake of completeness.

\begin{lem}\label{3.1} \text{(\bf Local Duality})
Let $(R,\mathfrak m)$ be a Cohen-Macaulay ring of dimension $n$. Let $M$ be an arbitrary
$R$-module. Then there are functorial isomorphisms
\[
 \Hom_R(H^i_{\mathfrak m} (M), E)\cong \Ext^{n-i}_R(M, K({\hat{R}}))
\]
for all $i\in \mathbb N$.
\end{lem}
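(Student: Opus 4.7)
My plan is to prove the Local Duality by expressing local cohomology through the \v{C}ech complex, Matlis-dualizing, using tensor-hom adjunction, and then identifying the Matlis dual of the \v{C}ech complex (up to a shift) as an injective resolution of $K(\hat R)$. Concretely, let $\underline{x}= x_1,\ldots,x_n$ be a system of parameters of $R$, so $\Rad(\underline{x})R = \mathfrak{m}$. By Remark \ref{1}, $H^i_{\mathfrak m}(M)\cong H^i(\check{C}_{\underline{x}}\otimes_R M)$. Since $E$ is injective, $\Hom_R(-, E)$ is exact and commutes with cohomology, so
\[
\Hom_R(H^i_{\mathfrak m}(M), E) \cong H^{-i}\bigl(\Hom_R(\check{C}_{\underline{x}}\otimes_R M, E)\bigr),
\]
and termwise tensor-hom adjunction produces a natural isomorphism of complexes $\Hom_R(\check{C}_{\underline{x}}\otimes_R M, E)\cong \Hom_R\bigl(M,\Hom_R(\check{C}_{\underline{x}}, E)\bigr)$.

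I would then study $C^{\bullet} := \Hom_R(\check{C}_{\underline{x}}, E)$, which is a bounded complex of injective $R$-modules sitting in degrees $-n,\ldots, 0$: each $\check{C}_{\underline{x}}^{j}$ is a localization and hence flat, and $\Hom_R(F, E)$ is injective whenever $F$ is flat. By exactness of $\Hom_R(-, E)$ once more,
\[
H^{-i}(C^{\bullet})\cong \Hom_R(H^{i}_{\mathfrak m}(R), E),
\]
which vanishes for $i\neq n$ because $R$ is Cohen-Macaulay of dimension $n$. For $i= n$, the classical Local Duality Theorem applied to the finitely generated module $R$ over $\hat R$ (using Cohen's structure theorem to realize $\hat R$ as a quotient of a Gorenstein ring) combined with Matlis reflexivity gives $\Hom_R(H^n_{\mathfrak m}(R), E)\cong K(\hat R)$. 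Consequently the shifted complex $J^{\bullet}$ defined by $J^{k} := C^{k-n}$ is a bounded complex of injective $R$-modules concentrated in degrees $0,\ldots, n$ whose cohomology is $K(\hat R)$ in degree $0$; that is, $J^{\bullet}$ is an honest injective resolution of $K(\hat R)$ of length $n$.

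Using $J^{\bullet}$ to compute $\Ext$, the identity $\Hom_R(M, J^{\bullet}) = \Hom_R(M, C^{\bullet})[-n]$ yields
\[
\Ext^{n-i}_R(M, K(\hat R)) = H^{n-i}(\Hom_R(M, J^{\bullet})) = H^{-i}\bigl(\Hom_R(M, \Hom_R(\check{C}_{\underline{x}}, E))\bigr),
\]
and combining this with the chain of natural isomorphisms from the first paragraph gives the desired functorial isomorphism $\Hom_R(H^i_{\mathfrak m}(M), E)\cong \Ext^{n-i}_R(M, K(\hat R))$. The substantive obstacle is the identification $\Hom_R(H^n_{\mathfrak m}(R), E)\cong K(\hat R)$: it rests on the classical Local Duality for the finitely generated module $R$, which forces one to descend to $\hat R$ (harmless since both sides are $\hat R$-modules and $E = E_R(k) = E_{\hat R}(k)$) and to invoke Matlis reflexivity in order to undo the double Matlis dual. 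Everything else is bookkeeping of adjunctions and degree shifts within the bounded-complex-of-injectives framework flagged in the Notation block, and this is also what allows the argument to bypass the spectral-sequence machinery alluded to before Proposition \ref{2.12}.
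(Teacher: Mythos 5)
Your proof is correct and is essentially the paper's own argument: both rest on the \v{C}ech complex of a system of parameters, the Cohen--Macaulay vanishing $H^i_{\mathfrak m}(R)=0$ for $i\neq n$, the identification $\Hom_R(H^n_{\mathfrak m}(R),E)\cong K(\hat R)$ via Cohen's structure theorem and Gorenstein local duality (with Matlis reflexivity over $\hat R$), and exactness of $\Hom_R(-,E)$ plus tensor--hom adjunction. The only cosmetic difference is that the paper packages the final step as $H^i_{\mathfrak m}(M)\cong \Tor^R_{n-i}(M,H^n_{\mathfrak m}(R))$ followed by the standard $\Tor$--$\Ext$ adjunction under Matlis duality, whereas you unwind that same adjunction explicitly by dualizing the \v{C}ech complex into an injective resolution of $K(\hat R)$ and computing $\Ext$ from it.
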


\begin{proof}
By Cohen's Structure Theorem any complete local ring $(R,\mathfrak m)$ is a homomorphic image of a regular local ring and - in particular - of local Gorenstein ring. Let ${\hat{R}}$ be the homomorphic image of a local Gorenstein ring $(S,{\mathfrak n})$ with $\dim(S)=t$. Then $H^n_{\mathfrak m} (R) \cong H^n_{{\mathfrak m}\hat{R}}(\hat{R})$ because any $R$-module $X$ with support contained in $V(\mathfrak m)$
admits the structure of an $\hat R$-module such that $X \cong X \otimes_R \hat R.$
Then by the Local Duality Theorem for Gorenstein rings (see \cite{goth}) there is an
isomorphism $\Hom_R(H^n_{\mathfrak m} (R), E) \cong \Ext^{t-n}_S({\hat{R}} , S).$

For a system of parameters $\underline{x}= x_1, \ldots ,x_n$ let $\Check{C}_{\underline{x}}$ denote the \v{C}ech complex with respect to $\underline{x}.$
Because $R$ is a Cohen-Macaulay ring $H^i_{\mathfrak m}(R) \cong H^i(\Check{C}_{\underline{x}}) = 0$ for all $i \not= n.$ That is, $\Check{C}_{\underline{x}}$ is
a flat resolution of $H^n_{\mathfrak m}(R)[-n].$ Therefore
\[
\Tor_{n-i}^R(M, H^n_{\mathfrak m}(R)) \cong H^i(\Check{C}_{\underline{x}} \otimes_R M) \cong H^i_{\mathfrak m}(M)
\]
for all $i \in \mathbb Z$ and an arbitrary $R$-module $M.$
Now we take the Matlis dual $\Hom_R(\cdot, E)$ of this isomorphism and get
\[
 \Hom_R(H^i_{\mathfrak m} (M), E)\cong \Ext^{n-i}_R(M, K({\hat{R}}))
\]
as it is easily seen by the adjunction isomorphism.
\end{proof}

Note that the above version of the Local Duality holds for an arbitrary $R$-module
$M.$ That is what we need in the sequel.

\begin{cor}
Let $(R,\mathfrak m)$ denote a Cohen-Macaulay ring of dimension $n$. Let $I\subseteq R$ be an ideal with $\grade(I, M)= c$ for an $R$-module $M$. Then there are isomorphisms
\[
\Ext^{n-i}_R(H^c_I(M), K({\hat{R}}))\cong \lim_{\longleftarrow} \Ext^{n-i}_R(\Ext^c_R(R/I^r, M),K({\hat{R}}))
\]
for all $i\in \mathbb N.$
\end{cor}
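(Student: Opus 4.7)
The plan is to combine the Local Duality Lemma \ref{3.1} with the standard direct-limit presentation of local cohomology, then push the limit through Matlis duality.

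First I would apply Lemma \ref{3.1} to the module $X = H^c_I(M)$, converting the target into
\[
\Ext^{n-i}_R(H^c_I(M), K(\hat R)) \cong \Hom_R(H^i_{\mathfrak m}(H^c_I(M)), E).
\]
Next I would invoke the well-known isomorphism $H^c_I(M) \cong \varinjlim_r \Ext^c_R(R/I^r, M)$ (an equivalent description of local cohomology for any $R$-module $M$, cf.\ \cite{goth}). Since the functor $H^i_{\mathfrak m}(\cdot)$ commutes with direct limits, I obtain
\[
H^i_{\mathfrak m}(H^c_I(M)) \cong \varinjlim_r H^i_{\mathfrak m}(\Ext^c_R(R/I^r, M)).
\]

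Then I would apply the Matlis dual $\Hom_R(\cdot, E)$. Because $E$ is injective, this functor converts direct limits into inverse limits, so that
\[
\Hom_R(H^i_{\mathfrak m}(H^c_I(M)), E) \cong \varprojlim_r \Hom_R(H^i_{\mathfrak m}(\Ext^c_R(R/I^r, M)), E).
\]
Finally I would apply Lemma \ref{3.1} again, this time termwise with $X = \Ext^c_R(R/I^r, M)$, to rewrite each factor of the inverse system as $\Ext^{n-i}_R(\Ext^c_R(R/I^r,M), K(\hat R))$. Chaining the isomorphisms together yields the desired formula, and functoriality of Lemma \ref{3.1} guarantees that all these maps assemble coherently into the inverse system.

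The only mildly delicate point is ensuring that Lemma \ref{3.1} is genuinely available for the (possibly non-finitely generated) modules $H^c_I(M)$ and $\Ext^c_R(R/I^r, M)$; this is precisely the content of the remark following Lemma \ref{3.1}, which emphasises that the statement there is formulated for arbitrary $R$-modules. With that in hand, no further technicality is required, and the proof reduces to the three natural identifications above.
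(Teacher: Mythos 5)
Your proposal is correct and follows essentially the same route as the paper: Local Duality (Lemma \ref{3.1}) applied to $H^c_I(M)$, the presentation $H^c_I(M)\cong \varinjlim \Ext^c_R(R/I^r,M)$ together with the fact that local cohomology commutes with direct limits, Matlis duality turning the direct system into an inverse system, and Local Duality again applied termwise. The only cosmetic point is that $\Hom_R(\varinjlim(-),E)\cong \varprojlim \Hom_R(-,E)$ holds for any module $E$ by general nonsense about contravariant Hom, so the injectivity of $E$ is not actually needed at that step.
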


\begin{proof}
By Local Duality (Lemma \ref{3.1}), we have the following isomorphisms
\[
\Ext^{n-i}_R(H^c_I(M), K({\hat{R}}))\cong \Hom_R(H^i_{\mathfrak m} (H^c_I(M)), E)
\]
for all $i \in \mathbb Z.$ The module on the right side is isomorphic to
\[
\Hom_R( \lim_{\longrightarrow} H^i_{\mathfrak m} (\Ext^c_R(R/I^r, M)), E)
\]
since $H^c_{I}(M) \cong \varinjlim \Ext^c_R(R/I^r,M)$ and because the
local cohomology commutes with direct limits. Finally
\[
\Ext^{n-i}_R(H^c_I(M), K({\hat{R}})) \cong
\lim_{\longleftarrow} \Hom_R(H^i_{\mathfrak m} (\Ext^c_R(R/I^r, M)), E)
\]
since the $\Hom$-functor in the first variable transforms a direct system into an inverse system, see \cite[Proposition 6.4.6]{v}. Then the claim follows since
\[
\Hom_R(H^i_{\mathfrak m} (\Ext^c_R(R/I^r, M)), E)
\cong \Ext^{n-i}_R(\Ext^c_R(R/I^r, M), K({\hat{R}}))
\]
as it is true again by Local Duality (see Lemma \ref{3.1}).
\end{proof}

\section{The Truncation Complex}
Let $(R,\mathfrak m)$ be a local ring of dimension $n$. Let $M\neq 0$ denote a finitely generated  $R$-module and $\dim M = n.$
Let $I\subseteq R$
be an ideal of $R$ with $\grade(I,M) = c.$ Suppose that $E^{\cdot}_R(M)$ is a minimal injective resolution of $M.$ Then it follows from
(Matlis \cite{m} or Gabriel \cite{g}) that
\[
E^{\cdot}_R(M)\cong\bigoplus\limits_{{\mathfrak p}\in \Supp M}E_R(R/{\mathfrak p})^{\mu_{i}({\mathfrak p}, M)},
\]
where $\mu_{i}({\mathfrak p}, M)=\dim_{k({\mathfrak p})}(\Ext^i_{R_{\mathfrak p}}(k({\mathfrak p}),M_{\mathfrak p}))$.
We get $\Gamma_{I}(E_R(R/{\mathfrak p}))=0$ for all $\mathfrak p\notin V(I)$ and $\Gamma_{I}(E_R(R/{\mathfrak p}))= E_R(R/{\mathfrak p})$ for all $\mathfrak p\in V(I)$.
Moreover $\mu_{i}({\mathfrak p}, M)= 0$ for all $i< c$ since $\grade (I,M) =c$.
Whence for all $i< c$ it follows that
\[
\Gamma_I(E^{\cdot}_R(M))^{i}\cong\bigoplus\limits_{{\mathfrak p}\in V(I) \cap \Supp M} \Gamma_I (E_R(R/{\mathfrak p}))^{\mu_{i}({\mathfrak p}, M)}= 0.
\]
Therefore $H^c_I(M)$ is isomorphic to the kernel of $\Gamma_I(E^{\cdot}_R(M))^c\rightarrow \Gamma_I(E^{\cdot}_R(M))^{c+1}.$ Whence there is an embedding of complexes of $R$-modules  $ H^c_I(M)[-c]\rightarrow \Gamma_I(E^{\cdot}_R(M))$.

\begin{defn}\label{ws}
Let $C^{\cdot}_M(I)$ be the cokernel of the above embedding. It is called the truncation
complex. So there is a short exact sequence of complexes of $R$-modules
\[
0\rightarrow H^c_I(M)[-c]\rightarrow \Gamma_I(E^{\cdot}_R(M))\rightarrow C^{\cdot}_M(I)\rightarrow 0.
\]
In particular it follows that $H^i(C^{\cdot}_M(I))= 0$ for all $i\leq c$ or $i> n$ and $H^i(C^{\cdot}_M(I))\cong H^i_I(M)$ for all $c< i\leq n.$
\end{defn}

The advantage of the truncation complex is that it separates the information of cohomology modules of $H^i_I(M)$ for $i= c$ from those with $i\neq c$.

\begin{thm}\label{4.2}
Fix the previous notation. Let $M$ be a maximal Cohen-Macaulay module with $\dim M/IM = d.$ We put $c = \dim M - \dim M/IM = \grade(I,M).$ Then we have the following results:
\begin{itemize}
\item[(a)] There are an exact sequence
\[
0\rightarrow H^{n-1}_{\mathfrak m}(C^{\cdot}_M(I))\rightarrow H^d_{\mathfrak m}(H^c_I(M))\rightarrow H^n_{\mathfrak m} (M)\rightarrow H^{n}_{\mathfrak m}(C^{\cdot}_M(I))\rightarrow 0
\]
and isomorphisms $H^{i-1}_{\mathfrak m}(C^{\cdot}_M(I))\cong H^{i-c}_{\mathfrak m}(H^c_I(M))$ for all $i\neq n,n+1$.
\item[(b)] There are an exact sequence
\[
0\rightarrow \Ext^{n-1}_R(k, C^{\cdot}_M(I))\rightarrow \Ext^{d}_R(k,H^c_I(M))\rightarrow \Ext^{n}_R(k, M)\rightarrow
\Ext^{n}_R(k, C^{\cdot}_M(I))
\]
and isomorphisms $\Ext^{i-c}_R(k, H^c_I(M))\cong \Ext^{i-1}_R(k, C^{\cdot}_M(I))$ for all $i< n.$
\item[(c)] Assume in addition that $R$ is a Cohen-Macaulay ring. There are an exact sequence
\[
0\rightarrow \Ext^{0}_R(C^{\cdot}_M(I), K({\hat{R}}))\rightarrow K({\hat{M}})\rightarrow \Ext^{c}_R(H^c_I(M), K({\hat{R}}))\rightarrow \Ext^{1}_R(C^{\cdot}_M(I), K({\hat{R}}))\rightarrow 0
\]
and isomorphisms $\Ext^{i+c}_R(H^c_I(M), K({\hat{R}}))\cong \Ext^{i+1}_R(C^{\cdot}_M(I), K({\hat{R}}))$ for all $i> 0.$
\end{itemize}
\end{thm}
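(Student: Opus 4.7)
My plan is to obtain all three statements by applying three different functors to the short exact sequence of complexes from Definition \ref{ws},
\[
0\rightarrow H^c_I(M)[-c]\rightarrow \Gamma_I(E^{\cdot}_R(M))\rightarrow C^{\cdot}_M(I)\rightarrow 0,
\]
and exploiting the fact that, in each case, the middle term has only one non-vanishing cohomology module. In every case the resulting long exact sequence degenerates to the claimed isomorphisms plus a single four-term sequence sitting around the non-vanishing degree.

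For (a) I would apply the derived functor $R\Gamma_{\mathfrak m}$. The point is that $I\subseteq\mathfrak m$ implies $\Gamma_{\mathfrak m}\circ\Gamma_I=\Gamma_{\mathfrak m}$, so the hypercohomology of the middle complex equals $H^i_{\mathfrak m}(M)$. Since $M$ is maximal Cohen-Macaulay of dimension $n$, $H^i_{\mathfrak m}(M)=0$ for $i\neq n$; using this together with the dimension bound $\dim_R H^c_I(M)\leq d$, which forces $H^{d+1}_{\mathfrak m}(H^c_I(M))=0$, the long exact sequence
\[
\cdots\to H^{i-c}_{\mathfrak m}(H^c_I(M))\to H^i_{\mathfrak m}(M)\to H^i_{\mathfrak m}(C^{\cdot}_M(I))\to H^{i+1-c}_{\mathfrak m}(H^c_I(M))\to\cdots
\]
collapses to the stated isomorphisms for $i\neq n,n+1$ and to the stated four-term sequence at $i=n$. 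Part (b) runs in parallel with $R\Hom_R(k,-)$ in place of $R\Gamma_{\mathfrak m}$. Here the required auxiliary computation is $\Ext^i_R(k,\Gamma_I(E^{\cdot}_R(M)))\cong\Ext^i_R(k,M)$, which follows from the decomposition of $\Gamma_I(E^{\cdot}_R(M))$ into summands $E_R(R/{\mathfrak p})$ with $\mathfrak p\in V(I)$ and the standard fact that $\Hom_R(k,E_R(R/{\mathfrak p}))=0$ unless $\mathfrak p=\mathfrak m$, so only the $\mathfrak m$-component survives and recovers $\Hom_R(k,E^{\cdot}_R(M))$. The depth condition $\Ext^i_R(k,M)=0$ for $i<n$ (equivalently Proposition \ref{2.12} applied to $M$) then finishes the argument exactly as in (a), but the vanishing is one-sided, which is why the conclusion is restricted to $i<n$ and the four-term sequence at $i=n$ has no final zero.

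For (c) the fastest route is to dualise (a): apply $\Hom_R(-,E)$ to the four-term exact sequence of (a), which is exact, and then translate each Matlis dual into an Ext by Local Duality (Lemma \ref{3.1}). This converts $\Hom_R(H^n_{\mathfrak m}(M),E)$ into $K(\hat M)$, $\Hom_R(H^d_{\mathfrak m}(H^c_I(M)),E)$ into $\Ext^c_R(H^c_I(M),K(\hat R))$, and $\Hom_R(H^j_{\mathfrak m}(C^{\cdot}_M(I)),E)$ into $\Ext^{n-j}_R(C^{\cdot}_M(I),K(\hat R))$, after which the re-indexing $j=n-i+1$ sends the range $i\neq n,n+1$ of (a) to $i>0$ in (c). The main obstacle here is justifying Local Duality for the complex $C^{\cdot}_M(I)$, since Lemma \ref{3.1} is only stated for a single module. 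I would resolve this by reading the proof of Lemma \ref{3.1} in complex form: the \v Cech complex $\check C_{\underline x}$ remains a flat resolution of $H^n_{\mathfrak m}(R)[-n]$, and the computation $\Tor_{n-i}^R(X,H^n_{\mathfrak m}(R))\cong H^i_{\mathfrak m}(X)$ goes through for any complex $X$; Matlis-dualising and adjunction then yield $\Hom_R(H^i_{\mathfrak m}(X),E)\cong\Ext^{n-i}_R(X,K(\hat R))$ in that generality. As a sanity check one can alternatively apply $R\Hom_R(-,K(\hat R))$ directly to the short exact sequence of complexes and compute $\Ext^i_R(\Gamma_I(E^{\cdot}_R(M)),K(\hat R))\cong \Ext^i_R(M,K(\hat R))$ via the same $\Gamma_{\mathfrak m}\Gamma_I=\Gamma_{\mathfrak m}$ identity; this vanishes for $i\neq 0$ and equals $K(\hat M)$ for $i=0$, giving (c) independently of (a).
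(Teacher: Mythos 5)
Your arguments for (a) and (b) are essentially the paper's own, up to cosmetic choices: you apply $\mathrm{R}\Gamma_{\mathfrak m}$ (the paper tensors the truncation sequence with the \v{C}ech complex $\check{C}_{\underline{x}}$, which is the same thing) and $\mathrm{R}\Hom_R(k,\cdot)$ (the paper uses a free resolution $F^{\cdot}_R(k)$), identify the hypercohomology of the middle term $\Gamma_I(E^{\cdot}_R(M))$ with $H^i_{\mathfrak m}(M)$ resp. $\Ext^i_R(k,M)$, and let the maximal Cohen--Macaulay hypothesis collapse the long exact sequence. Your explicit appeal to $\dim_R H^c_I(M)\le d$ to force $H^{d+1}_{\mathfrak m}(H^c_I(M))=0$, which is what produces the terminal zero in (a), is a detail the paper leaves implicit, and your socle computation of $\Hom_R(k,\Gamma_I(E^{\cdot}_R(M)))$ is a more direct substitute for the paper's chain of \v{C}ech quasi-isomorphisms; both are fine. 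For (c) you genuinely diverge. The paper applies $\Hom_R(\cdot,E^{\cdot}_R(K({\hat{R}})))$ to the truncation sequence and shows, via Sharp's description of the minimal injective resolution of $K({\hat{R}})$ over a Cohen--Macaulay ring, that $X=\Hom_R(E^{\cdot}_R(M),E^{\cdot}_R(K({\hat{R}})))$ is a flat resolution of $K({\hat{M}})$, so the middle term becomes $\varprojlim(R/I^r\otimes_R X)$ and computes $L_0\Lambda^I(K({\hat{M}}))=K({\hat{M}})$ with vanishing higher derived completions; this buys the natural map of (c) directly from the truncation complex, in the form quoted in Corollary \ref{4.4}(c) and Theorem \ref{1.1}(d), plus the flat-resolution fact itself. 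You instead Matlis-dualize (a) and invoke Lemma \ref{3.1} at the level of complexes; your sketch of that extension is sound ($\check{C}_{\underline{x}}$ is a flat resolution of $H^n_{\mathfrak m}(R)[-n]$, adjunction, and $\Hom_R(\check{C}_{\underline{x}},E)$ is a bounded complex of injectives quasi-isomorphic to $K({\hat{R}})[n]$, hence computes hyper-Ext even from the unbounded-above complex $C^{\cdot}_M(I)$), and it yields the stated exact sequence and isomorphisms more quickly; it is also the mechanism the paper itself uses later for $(2)\Leftrightarrow(3)$ of Corollary \ref{4.5}. What it does not give for free is that your maps coincide with the natural homomorphisms arising from the truncation complex; if (c) is to be used as in the paper, you should add the (routine) check that the local duality isomorphisms are compatible with the two connecting homomorphisms.

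One caveat: your closing ``sanity check'' is not justified as written. The isomorphism $\Ext^i_R(\Gamma_I(E^{\cdot}_R(M)),K({\hat{R}}))\cong\Ext^i_R(M,K({\hat{R}}))$ is true, but it does not follow from $\Gamma_{\mathfrak m}\Gamma_I=\Gamma_{\mathfrak m}$: that identity has no contravariant analogue for $\Hom_R(\cdot,E^{\cdot}_R(K({\hat{R}})))$, and $\Gamma_I(E^{\cdot}_R(M))$ is quasi-isomorphic to $\mathrm{R}\Gamma_I(M)$, not to $M$. Establishing precisely this isomorphism is the substance of the paper's proof of (c) (the flat resolution and derived $I$-adic completion argument). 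Since you offer it only as an optional alternative, your main route for (c) stands.
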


\begin{proof}
$(a)$ Let $\underline{x}= x_1, \ldots,x_n\in {\mathfrak m}$ denote a system of parameters of $R$. We tensor the short exact sequence of the truncation complex by the \v{C}ech complex $\Check{C}_{\underline{x}}.$ Then the resulting sequence of complexes remains exact because $\Check{C}_{\underline{x}}$ is a complex of flat $R$-modules. That is, there is the following short exact sequence of complexes of $R$-modules
\[
0\rightarrow \Check{C}_{\underline{x}}\otimes_{R} H^c_I(M)[-c]\rightarrow \Check{C}_{\underline{x}}\otimes_{R} \Gamma_I(E^{\cdot}_R(M))\rightarrow \Check{C}_{\underline{x}}\otimes_{R} C^{\cdot}_M(I)\rightarrow 0.
\]
Now we look at the cohomology of the complex in the middle. Since $\Gamma_I(E^{\cdot}_R(M))$ is a complex of injective $R$-modules the natural morphism
\[
\Gamma_{\mathfrak m}(E^{\cdot}_R(M)) \cong \Gamma_{\mathfrak m}(\Gamma_I(E^{\cdot}_R(M))) \to \Check{C}_{\underline{x}}\otimes_{R} \Gamma_I(E^{\cdot}_R(M))
\]
induces an isomorphism in cohomology (see \cite[Theorem 1.1]{pet2}). Because $M$ is a maximal Cohen-Macaulay
$R$-module the only non-vanishing local cohomology module is $H^n_{\mathfrak m}(M).$
So the result follows from the long exact cohomology sequence.

$(b)$ Let $F^{\cdot}_R(k)$ be a free resolution of $k$.
Apply the functor $\Hom_R(F^{\cdot}_R(k), .)$ to the short exact sequence of the truncation complex. Then it induces the following short exact sequences of complexes of $R$-modules
\[
0\rightarrow \Hom_R(F^{\cdot}_R(k), H^c_I(M))[-c]\rightarrow \Hom_R(F^{\cdot}_R(k), \Gamma_I(E^{\cdot}_R(M)))\rightarrow \Hom_R(F^{\cdot}_R(k), C^{\cdot}_M(I))\rightarrow 0.
\]
Now $\Hom_R(F^{\cdot}_R(k), \Gamma_I(E^{\cdot}_R(M))) \qism \Gamma_I(\Hom_R(F^{\cdot}_R(k), E^{\cdot}_R(M)))$ since $F^{\cdot}_R(k)$ is a right bounded complex of finitely generated free $R$-modules. Since any $R$-module of the complex $\Hom_R(F^{\cdot}_R(k), E^{\cdot}_R(M))$ is injective. By \cite[Theorem 1.1]{pet2} it implies that the complex $\Gamma_I(\Hom_R(F^{\cdot}_R(k), E^{\cdot}_R(M)))$ is quasi-isomorphic to $\Check{C}_{\underline{y}}\otimes_{R} \Hom_R(F^{\cdot}_R(k), E^{\cdot}_R(M))$ where $\underline{y}= y_1, \ldots ,y_r\in I$ such that $\Rad I= \Rad(\underline{y})R$.

Now tensoring with a right bounded complex of flat $R$-modules preserves the quasi-isomorphisms and any $R$-module of $E^{\cdot}_R(M)$ is injective. It induces the following quasi-isomorphism
\[
\Check{C}_{\underline{y}}\otimes_{R} \Hom_R(k, E^{\cdot}_R(M)) \qism \Check{C}_{\underline{y}}\otimes_{R} \Hom_R(F^{\cdot}_R(k), E^{\cdot}_R(M)).
\]
But the complex on the left side is isomorphic to $\Hom_R(k, E^{\cdot}_R(M))$. This is true because of each $R$-module of the complex $\Hom_R(k, E^{\cdot}_R(M))$ has support in $V(\mathfrak{m})$. Therefore the complex $\Hom_R(F^{\cdot}_R(k), \Gamma_I(E^{\cdot}_R(M)))$ is quasi-isomorphic to $\Hom_R(k, E^{\cdot}_R(M))$. Then the result follows by the long exact cohomology sequence. To this end note that the assumption on the maximal Cohen-Macaulayness of $M$ implies that $\Ext^{i}_R(k, M)= 0$ for all $i< n.$

(c) Let $E^{\cdot}_R(K({\hat{R}}))$ be a minimal injective resolution of $K({\hat{R}})$. We
apply the functor $\Hom_R( ., E^{\cdot}_R(K({\hat{R}})))$ to the short exact sequence of the truncation complex. Then we have a short exact sequence of complexes of $R$-modules
\begin{gather*}
0\rightarrow \Hom_R(C^{\cdot}_M(I), E^{\cdot}_R(K({\hat{R}})))\rightarrow \Hom_R(\Gamma_I(E^{\cdot}_R(M)),
E^{\cdot}_R(K({\hat{R}})))\\
\rightarrow \Hom_R(H^c_I(M), E^{\cdot}_R(K({\hat{R}})))[c]\rightarrow 0.
\end{gather*}
By the definition of the section functor $\Gamma_I(\cdot)$ and a standard isomorphisms
for the direct and inverse limits  the complex in the middle is isomorphic to
\[
\lim_{\longleftarrow}
\Hom_R(\Hom_R(R/I^r,E^{\cdot}_R(M)), E^{\cdot}_R(K({\hat{R}})))
\cong \lim_{\longleftarrow} (R/I^r\otimes_{R} \Hom_R(E^{\cdot}_R(M),
E^{\cdot}_R(K({\hat{R}}))))
\]
For the last isomorphism note that $R/I^r$ is finitely generated $R$-module for all $r \geq 1.$
We claim that $X := \Hom_R(E^{\cdot}_R(M), E^{\cdot}_R(K({\hat{R}})))$
is a flat resolution of $K({\hat{M}})$. To
this end we first note that
\[
X^s = \prod_{i \in \mathbb{Z}}
\Hom_R(E^i_R(M), E^{i+s}_R(K({\hat{R}}))).
\]
Therefore $X^s, s \in \mathbb{Z},$ is a flat $R$-module. Moreover, by the
definition we have that $H^s(X) \cong \Ext^s_R(M,K(\hat{R}))$ for
all $s\in \mathbb{Z}.$ Since $M$ is a Cohen-Macaulay $R$-module and
$R$ is a Cohen-Macaulay ring it follows by the Local Duality Theorem (see Lemma \ref{3.1})
that $H^s(X) = 0$ for all $s \not= 0$ and $H^s(X) \cong K({\hat{M}})$
for $s = 0.$ In order to prove the claim we have to show that $X^s = 0$
for all $s > 0$. Since $R$ is a Cohen-Macaulay ring so
\[
E^{i}_R(K({\hat{R}}))) \cong
\oplus_{\height{p} =i}E_R(R/\mathfrak{p})
\]
(see \cite{sharp}).
On the other side $E^{i+s}_R(M) \cong \oplus_{\mathfrak{q}\in \Supp_R (M)}
E_R(R/\mathfrak{q})^{\mu_{i}(\mathfrak{q},M)}.$ Since $M$ is a maximal Cohen-Macaulay $R$-module it follows that $\mu_{i}(\mathfrak{q},M)= 0$ for all ${\mathfrak{q}\in \Supp_R (M)}$ such that $i+s< \height(\mathfrak{q})$. Now let $i+s\geq \height(\mathfrak{q})$ and $\height(\mathfrak{p})= i$. Then we have that $\Hom_R(E_R(R/\mathfrak{q}), E_R(R/\mathfrak{p}))= 0$ for all $k> 0.$ Moreover this implies that
\[
\Hom_R(E_R(R/\mathfrak{q}), E^{i}_R(K({\hat{R}})))= 0
\]
for all $i$ as easily seen. This finally implies that $X^s= 0$ for all $s> 0$. So $X$ is indeed a flat resolution of $K({\hat{M}})$.
Then the cohomologies of the complex
\[
\lim_{\longleftarrow} (R/I^r\otimes_{R} \Hom_R(E^{\cdot}_R(M), E^{\cdot}_R(K({\hat{R}}))))
\]
are zero for all $i\not=0$ and
$L_0\Lambda^I(K({\hat{M}}))= K({\hat{M}})$.
So the long exact sequence of local cohomologies provides the statements in (c).
\end{proof}

\subsection{Necessary Condition of $H^i_I(M)= 0$, for all $i\neq c$}
In the following let $(R,\mathfrak m)$ denote a local ring of dimension $n$. Let
$M\neq 0$ be a finitely generated $R$-module with $\dim M = n.$ Let $I \subset R$ be an
ideal such that $c = \grade(I,M).$

\begin{cor}\label{4.4}
Let $M\neq 0$ be a maximal Cohen-Macaulay $R$-module such that $H^i_I(M)= 0$ for all $i\neq c = \grade(I,M).$ Then:
\begin{itemize}
\item[(a)] The natural homomorphism
$$H^d_{\mathfrak m} (H^c_I(M))\rightarrow H^n_{\mathfrak m} (M)$$
is an isomorphism and $H^i_{\mathfrak m} (H^c_I(M))= 0$ for all $i\neq d.$
\item[(b)] The natural homomorphism
$$\Ext^{d}_R(k,H^c_I(M))\rightarrow \Ext^{n}_R(k, M)$$
is an isomorphism and $\Ext^i_R(k,H^c_I(M))=0$ for all $i<d$.
\item[(c)] Suppose that $R$ is in addition a Cohen-Macaulay ring. The natural homomorphism
$$K({\hat{M}})\rightarrow \Ext^{c}_R(H^c_I(M), K({\hat{R}}))$$
is an isomorphism and $\Ext^{i+c}_R(H^c_I(M), K({\hat{R}}))= 0$ for all $i> 0.$
\end{itemize}
\end{cor}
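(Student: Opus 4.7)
The plan is to derive all three parts as immediate byproducts of Theorem \ref{4.2} once we observe that the hypothesis forces the truncation complex to be cohomologically trivial. By Definition \ref{ws} we have $H^i(C^{\cdot}_M(I)) = 0$ for $i \leq c$ or $i > n$, and $H^i(C^{\cdot}_M(I)) \cong H^i_I(M)$ for $c < i \leq n$. Hence the assumption $H^i_I(M) = 0$ for every $i \neq c$ yields $H^i(C^{\cdot}_M(I)) = 0$ for all $i \in \mathbb{Z}$, i.e., $C^{\cdot}_M(I) \qism 0$.

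Next I would feed this acyclicity into each of the three derived-functor constructions used in Theorem \ref{4.2}. Tensoring with the bounded \v{C}ech complex $\Check{C}_{\underline{x}}$ (a bounded complex of flat $R$-modules) preserves acyclicity, so $H^i_{\mathfrak m}(C^{\cdot}_M(I)) = 0$ for every $i$. The same principle, applied through the free resolution $F^{\cdot}_R(k)$ of $k$ and through the injective resolution $E^{\cdot}_R(K(\hat{R}))$ of $K(\hat{R})$ used in the proofs of Theorem \ref{4.2}(b) and (c), gives $\Ext^i_R(k, C^{\cdot}_M(I)) = 0$ and $\Ext^i_R(C^{\cdot}_M(I), K(\hat{R})) = 0$ for every $i$.

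Substituting these vanishings into the four-term exact sequences of Theorem \ref{4.2} collapses the flanking terms and leaves the natural homomorphisms as isomorphisms, producing the three identifications $H^d_{\mathfrak m}(H^c_I(M)) \cong H^n_{\mathfrak m}(M)$, $\Ext^d_R(k, H^c_I(M)) \cong \Ext^n_R(k, M)$, and $K(\hat{M}) \cong \Ext^c_R(H^c_I(M), K(\hat{R}))$. The accompanying isomorphisms of Theorem \ref{4.2} then turn into the claimed vanishings: $H^j_{\mathfrak m}(H^c_I(M)) = 0$ for $j \neq d, d+1$ from part (a); $\Ext^j_R(k, H^c_I(M)) = 0$ for $j < d$ from part (b); and $\Ext^{c+j}_R(H^c_I(M), K(\hat{R})) = 0$ for $j > 0$ from part (c).

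The only loose end is the boundary index $j = d+1$ in (a), which is not reached by the isomorphisms $H^{i-1}_{\mathfrak m}(C^{\cdot}_M(I)) \cong H^{i-c}_{\mathfrak m}(H^c_I(M))$ (these are stated for $i \neq n, n+1$). Since $\Supp_R H^c_I(M) \subseteq V(I) \cap \Supp_R M$ has dimension at most $d = \dim M - c$, Grothendieck's vanishing theorem forces $H^{d+1}_{\mathfrak m}(H^c_I(M)) = 0$, closing this gap. That borderline invocation of Grothendieck vanishing is really the only delicate point; otherwise this is a direct substitution into Theorem \ref{4.2}, which is precisely why the statement deserves to be a corollary.
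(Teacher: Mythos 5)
Your argument is correct and takes essentially the same route as the paper: the hypothesis makes the truncation complex $C^{\cdot}_M(I)$ acyclic, this acyclicity is preserved under $\Check{C}_{\underline{x}}\otimes_R -$, $\Hom_R(F^{\cdot}_R(k),-)$ and $\Hom_R(-,E^{\cdot}_R(K(\hat{R})))$, and substituting the resulting vanishings into Theorem \ref{4.2} yields all three parts. Your additional treatment of the boundary index $d+1$ in (a) --- via Grothendieck vanishing, or equivalently by reading $H^{d+1}_{\mathfrak m}(H^c_I(M))\cong H^{n+1}_{\mathfrak m}(M)=0$ directly off the long exact sequence --- is a detail the paper's terse proof leaves implicit, and you handle it correctly.
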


\begin{proof}
Because of assumption $C^{\cdot}_M(I)$ is an exact complex.

First we prove part $(a)$. Apply the $\Check{C}$ech complex $\Check{C}_{\underline{x}}\otimes_{R} \cdot$
to the short exact sequence of the truncation complex since $\Check{C}_{\underline{x}}$ is a complex of flat $R$- modules so $\Check{C}_{\underline{x}}\otimes_{R} C^{\cdot}_M(I)$ is exact. Whence result follows from Theorem \ref{4.2}.

Now we prove $(b).$
Let $F^{\cdot}_R(k)$ be a free resolution of $k.$ Then apply $\Hom_R(F^{\cdot}_R(k), .)$ to the short exact sequence of the truncation complex. Since $\Hom_R(F^{\cdot}_R(k), C^{\cdot}_M(I))$ is an exact complex the result follows from Theorem \ref{4.2}.

Finally we prove $(c).$
Let $E^{\cdot}_R(K({\hat{R}}))$ be injective resolution of $K({\hat{R}}).$ Then apply $\Hom_R(.,  E^{\cdot}_R(K({\hat{R}})))$ to the short exact sequence of the truncation complex. Since $\Hom_R(C^{\cdot}_M(I)),E^{\cdot}_R(K({\hat{R}})))$ is an exact complex the result follows from Theorem \ref{4.2}.
\end{proof}

\begin{lem}\label{4.6}
Let $(R,\mathfrak m)$ be ring, $I\subseteq R$ be an ideal, and $M\neq 0$ be a maximal Cohen-Macaulay $R$-module. Suppose that $H^i_I(M)= 0$ for all $i\neq c = \grade(I,M)$ and ${\mathfrak p}\in \Supp_R(M)\cap V(I).$
 Then
\begin{itemize}
\item[(1)] $H^i_{IR_{\mathfrak p}}(M_{\mathfrak p})= 0$ for all $i\neq c$.
\item[(2)] The natural homomorphism
$$H^i_{{\mathfrak p}R_{\mathfrak p}}(H^c_{IR_{\mathfrak p}}(M_{\mathfrak p}))\rightarrow H^{\dim(M_{\mathfrak p})}_{{\mathfrak p}R_{\mathfrak p}}(M_{\mathfrak p})$$
is an isomorphism for $i= h({\mathfrak p})= \dim(M_{\mathfrak p})- c$ and $H^i_{{\mathfrak p}R_{\mathfrak p}}(H^c_{IR_{\mathfrak p}}(M_{\mathfrak p}))= 0$ for all $i\neq h({\mathfrak p})= \dim(M_{\mathfrak p})- c$.
\end{itemize}
\end{lem}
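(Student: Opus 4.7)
The plan is to deduce (1) from the compatibility of local cohomology with localization, and then to establish (2) by re-running, in the localized data $(R_{\mathfrak p}, M_{\mathfrak p}, IR_{\mathfrak p})$, the truncation-complex argument behind Corollary \ref{4.4}(a). For (1) I would simply invoke flat base change: $H^i_{IR_{\mathfrak p}}(M_{\mathfrak p}) \cong H^i_I(M)\otimes_R R_{\mathfrak p}$ for every $i$, and the right-hand side vanishes for $i\neq c$ by hypothesis.

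For (2) I would first record the hypotheses that make the truncation-complex argument run over $R_{\mathfrak p}$. Since $M$ is Cohen-Macaulay, $M_{\mathfrak p}$ is a Cohen-Macaulay $R_{\mathfrak p}$-module with $\depth(M_{\mathfrak p}) = \dim(M_{\mathfrak p})$; Proposition \ref{2.13} gives $\grade(IR_{\mathfrak p},M_{\mathfrak p}) = c$; and (1) yields $H^i_{IR_{\mathfrak p}}(M_{\mathfrak p}) = 0$ for $i\neq c$. I would then write down the truncation complex
\[
0\to H^c_{IR_{\mathfrak p}}(M_{\mathfrak p})[-c]\to \Gamma_{IR_{\mathfrak p}}(E^{\cdot}_{R_{\mathfrak p}}(M_{\mathfrak p}))\to C^{\cdot}_{M_{\mathfrak p}}(IR_{\mathfrak p})\to 0,
\]
observe that $C^{\cdot}_{M_{\mathfrak p}}(IR_{\mathfrak p})$ is exact by the vanishing just recorded, tensor the sequence with the \v{C}ech complex of a system of parameters of $R_{\mathfrak p}$, and read off the resulting long exact sequence. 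The middle term computes $H^{\ast}_{\mathfrak p R_{\mathfrak p}}(M_{\mathfrak p})$ as in Lemma \ref{4.3}, and the exactness of the tensored third term yields isomorphisms
\[
H^i_{\mathfrak p R_{\mathfrak p}}(H^c_{IR_{\mathfrak p}}(M_{\mathfrak p})) \cong H^{i+c}_{\mathfrak p R_{\mathfrak p}}(M_{\mathfrak p})
\]
for all $i$, induced by the natural map in the truncation sequence. Cohen-Macaulayness of $M_{\mathfrak p}$ forces the right-hand side to vanish unless $i+c = \dim(M_{\mathfrak p})$, i.e. $i = h(\mathfrak p)$, which is precisely the content of (2).

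The only non-formal point is that Corollary \ref{4.4}(a) is stated for \emph{maximal} Cohen-Macaulay modules, whereas $M_{\mathfrak p}$ need not be maximal Cohen-Macaulay over $R_{\mathfrak p}$. Inspection of its proof (which feeds through Theorem \ref{4.2}(a)) shows, however, that maximal Cohen-Macaulayness enters only to concentrate $H^{\ast}_{\mathfrak m}(M)$ in the single degree $\dim(R)$; the same concentration holds for any Cohen-Macaulay module with $\dim(R)$ replaced by $\dim(M)$, so the argument transfers without change after the replacement $\dim(R_{\mathfrak p})\mapsto \dim(M_{\mathfrak p})$. Once this is registered, the lemma is a direct localization of results already in hand.
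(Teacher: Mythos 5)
Your argument is correct and follows essentially the same route as the paper, whose proof of this lemma consists of citing Proposition \ref{2.13} and Corollary \ref{4.4}(a) after localizing at $\mathfrak p$: your part (1) is the flat base change underlying Proposition \ref{2.13}, and your part (2) re-runs the truncation-complex argument of Theorem \ref{4.2}(a) over $R_{\mathfrak p}$ (only note that the identification of the cohomology of the middle term with $H^{\ast}_{{\mathfrak p}R_{\mathfrak p}}(M_{\mathfrak p})$ comes from the quasi-isomorphism used in that proof, i.e. \cite[Theorem 1.1]{pet2}, rather than from Lemma \ref{4.3}, whose support hypothesis the middle term need not satisfy). Your closing observation that $M_{\mathfrak p}$ need only be Cohen--Macaulay over $R_{\mathfrak p}$, with $\dim(M_{\mathfrak p})$ playing the role of $\dim(R)$, is exactly the point the paper's terse citation of Corollary \ref{4.4}(a) glosses over, and your justification of that transfer is sound.
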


\begin{proof}
It is follow from Proposition \ref{2.13} and Corollary \ref{4.4} $(a)$.
\end{proof}
\subsection{Converse(Sufficient Condition of $H^i_I(M)= 0$, for all $i\neq c$)}
Before we can prove Theorem \ref{4} which is one of the main result of this section we need the following Lemma. For the following technical result we need a few details on derived categories
and derived functors. For all these facts we refer to the Lecture Note by
R. Hartshorne (see \cite{har1}). We are grateful to P. Schenzel for suggesting this argument to us.

\begin{lem}\label{w4}
Let $M\neq 0$ be an $R$-module such that $\dim_R(M)=n=\dim(R)$. Let $c= \grade(I,M)$ where $I\subseteq R$ is an ideal. Then the following are equivalent:
\begin{itemize}
\item[(1)] The natural homomorphism
\[
\Ext^{i-c}_R(k,H^c_I(M))\rightarrow \Ext^{i}_R(k, M)
\]
is an isomorphism for all $i \in \mathbb Z$.
\item[(2)] The natural homomorphism
\[
H^{i-c}_{\mathfrak m} (H^c_I(M))\rightarrow H^i_{\mathfrak m} (M)
\]
is an isomorphism for all $i \leq n$.
\end{itemize}
\end{lem}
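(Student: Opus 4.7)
The plan is to apply $\Check{C}_{\underline{x}}\otimes_R(-)$ (with $\underline{x}=x_1,\ldots,x_n$ a system of parameters of $R$) and $\Hom_R(F^{\cdot}_R(k),-)$ (with $F^{\cdot}_R(k)$ a free resolution of $k$) to the truncation short exact sequence
\[
0 \to H^c_I(M)[-c] \to \Gamma_I(E^{\cdot}_R(M)) \to C^{\cdot}_M(I) \to 0
\]
of Definition \ref{ws}. Following the arguments in the proofs of Theorem \ref{4.2}(a),(b), and using that $k$ is $\mathfrak m$-torsion (hence $I$-torsion) together with $\Gamma_{\mathfrak m}\circ\Gamma_I=\Gamma_{\mathfrak m}$, the middle term computes $\HH^{\cdot}(M)$ in the first case and $\Ext^{\cdot}_R(k,M)$ in the second. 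This produces two long exact sequences
\begin{gather*}
\cdots \to \HH^{i-c}(H^c_I(M))\xrightarrow{\alpha_i}\HH^i(M)\to \mathcal{L}^i\to \HH^{i-c+1}(H^c_I(M))\to \cdots,\\
\cdots \to \Ext^{i-c}_R(k,H^c_I(M))\xrightarrow{\beta_i}\Ext^i_R(k,M)\to \mathcal{E}^i\to \Ext^{i-c+1}_R(k,H^c_I(M))\to \cdots,
\end{gather*}
where $\mathcal{L}^i:=H^i(\Check{C}_{\underline{x}}\otimes_R C^{\cdot}_M(I))$ and $\mathcal{E}^i:=H^i(\Hom_R(F^{\cdot}_R(k),C^{\cdot}_M(I)))$, and $\alpha_i,\beta_i$ are the natural maps of (2) and (1).

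By inspection of the second sequence, condition (1) is equivalent to $\mathcal{E}^i=0$ for all $i\in\mathbb{Z}$. For condition (2) I first record the dimension bound $\dim H^c_I(M)\leq n-c$: this holds because $\Supp H^c_I(M)\subseteq\Supp(M/IM)$ and the height inequality inside $R/\Ann_R M$ gives $\dim M/IM\leq \dim M-\height_M I\leq n-c$. Grothendieck vanishing then yields $\HH^p(H^c_I(M))=0$ for $p>n-c$ and $\HH^i(M)=0$ for $i>n$, from which the first long exact sequence automatically gives $\mathcal{L}^i=0$ for $i\geq n+1$. A diagram chase on the first sequence in the range $i\leq n$ --- using that ``$\alpha_i$ injective'' means the connecting morphism $\mathcal{L}^{i-1}\to \HH^{i-c}(H^c_I(M))$ vanishes while ``$\alpha_i$ surjective'' means $\HH^i(M)\to \mathcal{L}^i$ vanishes --- then shows that condition (2) is equivalent to $\mathcal{L}^i=0$ for all $i\leq n$, and hence, combined with the automatic vanishing above, for all $i\in\mathbb{Z}$.

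The proof therefore reduces to the equivalence $\mathcal{L}^i=0\;\forall i\Longleftrightarrow\mathcal{E}^i=0\;\forall i$, which I expect to be the main obstacle. The direction $(\Rightarrow)$ is formal: since $k$ is $\mathfrak m$-torsion, $\Hom_R(k,E)=\Hom_R(k,\Gamma_{\mathfrak m}(E))$ for every injective $R$-module $E$, so at the derived level $\Ext^{\cdot}_R(k,-)$ factors through $\Rgam(-)$, and $\mathcal{L}^i=0$ for all $i$ forces $\mathcal{E}^i=0$ for all $i$. For $(\Leftarrow)$ I would extend Proposition \ref{2.12} to bounded complexes: if $Y\in D^b(R)$ satisfies $H^i(Y)=0$ for $i<s$, then $\Ext^i_R(k,Y)=0$ for $i<s$ and $\Ext^s_R(k,Y)\cong \Hom_R(k,H^s(Y))$. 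This is proved by induction on the cohomological length of $Y$ using the canonical-truncation distinguished triangle $H^s(Y)[-s]\to Y\to \tau^{>s}Y$, with Proposition \ref{2.12} itself serving as the base case. Applied to the bounded $\mathfrak m$-torsion-cohomology complex $Y:=\Check{C}_{\underline{x}}\otimes_R C^{\cdot}_M(I)$: if some $\mathcal{L}^{i_0}\neq 0$ with $i_0$ minimal, then $\mathcal{E}^{i_0}\cong \Hom_R(k,\mathcal{L}^{i_0})\neq 0$ (an $\mathfrak m$-torsion module has non-zero socle), contradicting $\mathcal{E}^{i_0}=0$. This closes the equivalence and finishes the proof.
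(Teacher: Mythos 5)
Your argument is correct, and it follows the paper's overall skeleton --- reduce both conditions, via the truncation complex, to the exactness of $\Rgam(C^{\cdot}_M(I))$ (your $\mathcal{L}^{\cdot}$) and of ${\rm R}\Hom_R(k,C^{\cdot}_M(I))$ (your $\mathcal{E}^{\cdot}$), with the direction $(2)\Rightarrow(1)$ being formal because $k$ is $\mathfrak m$-torsion and $C^{\cdot}_M(I)$ is left bounded --- but you prove the hard direction $(1)\Rightarrow(2)$ by a genuinely different device. The paper argues by induction on $\alpha$ along the exact sequences $0\to\mathfrak m^{\alpha}/\mathfrak m^{\alpha+1}\to R/\mathfrak m^{\alpha+1}\to R/\mathfrak m^{\alpha}\to 0$ to get exactness of ${\rm R}\Hom(R/\mathfrak m^{\alpha},C^{\cdot}_M(I))$ for all $\alpha$, and then passes to the direct limit $\varinjlim{\rm R}\Hom(R/\mathfrak m^{\alpha},C^{\cdot}_M(I))\cong\Rgam(C^{\cdot}_M(I))$; you instead extend Proposition \ref{2.12} to complexes via canonical truncations and detect the minimal non-vanishing hypercohomology $\mathcal{L}^{i_0}$ of $\Rgam(C^{\cdot}_M(I))$ through its socle, using that $\mathcal{L}^{i_0}$ is $\mathfrak m$-torsion (which indeed follows from your first long exact sequence, since it is flanked by $\HH^{i}(M)$ and $\HH^{i+1-c}(H^c_I(M))$ --- worth saying explicitly). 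Your route buys a more "pointwise" argument that avoids the colimit identification of $\Gamma_{\mathfrak m}$ at the derived level, and you are also more careful than the paper in turning the range $i\le n$ of condition (2) into vanishing of $\mathcal{L}^i$ for all $i$ via the bound $\dim H^c_I(M)\le n-c$; the paper's route avoids truncation formalism at the cost of the $\mathfrak m$-adic filtration and limit step. Two small points: the base case of your truncation induction is simply a module concentrated in one degree (where the claim is trivial), so invoking Proposition \ref{2.12} there is a slight misattribution; and in the $(\Leftarrow)$ step you should state explicitly that $\mathcal{E}^{i_0}\cong\Ext^{i_0}_R(k,Y)$, i.e.\ that ${\rm R}\Hom_R(k,C^{\cdot}_M(I))\qism{\rm R}\Hom_R(k,\Rgam(C^{\cdot}_M(I)))$, which is exactly the identification you already set up in the $(\Rightarrow)$ direction.
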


\begin{proof}
First of all note that the statement in $(2)$ is equivalent
to the isomorphism for all $i \in \mathbb Z$. This follows because of
$\dim_R (H^c_I(M)) \leq d= n-c$ and $\dim_R (M) = n$ (see \cite{k}). By applying ${\rm{R}} \Gamma_\mathfrak{m}(\cdot)$ to the short exact sequence of the truncation complex it follows that the assumption in (2) is equivalent to the fact that ${\rm{R}} \Gamma_\mathfrak{m}(C^{\cdot}_M(I))$ is an exact complex.

Moreover by applying the derived functor ${\rm{R}} \Hom(k, \cdot)$ to the short exact
sequence of the truncation complex it follows that the statement in (1) is
equivalent to the fact that ${\rm{R}} \Hom(k, C^{\cdot}_M(I))$ is an exact
complex.

Now we prove that (2) implies (1). If ${\rm{R}} \Gamma_\mathfrak{m}(C^{\cdot}_M(I))$ is an
exact complex the same is true for
\[
{\rm{R}} \Hom(k, C^{\cdot}_M(I)) \cong {\rm{R}} \Hom(k,{\rm{R}} \Gamma_\mathfrak{m}(C^{\cdot}_M(I)))
\]
since $C^{\cdot}_M(I)$ is a left bounded complex.

In order to prove that (1) implies (2) consider the short exact sequence (in fact an exact triangle in
the derived category)
\[
0 \to \mathfrak{m}^{\alpha}/\mathfrak{m}^{\alpha +1 } \to R/\mathfrak{m}^{\alpha +1}
\to R/\mathfrak{m}^{\alpha} \to 0.
\]
for $\alpha \in \mathbb{N}$. By applying the derived functor ${\rm{R}} \Hom(\cdot,C^{\cdot}_M(I))$ it provides a short
exact sequence of complexes
\[
0 \to {\rm{R}} \Hom(R/\mathfrak{m}^{\alpha},C^{\cdot}_M(I)) \to
{\rm{R}} \Hom(R/\mathfrak{m}^{\alpha +1},C^{\cdot}_M(I))
\to {\rm{R}} \Hom(\mathfrak{m}^{\alpha}/\mathfrak{m}^{\alpha +1 } ,C^{\cdot}_M(I)) \to 0.
\]
By induction on $\alpha$ it follows that ${\rm{R}} \Hom(R/\mathfrak{m}^{\alpha},C^{\cdot}_M(I))$
is an exact complex for all $\alpha \in \mathbb{N}$. Since $\varinjlim {\rm{R}}
 \Hom(R/\mathfrak{m}^{\alpha},C^{\cdot}_M(I)) \cong {\rm{R}} \Gamma_\mathfrak{m}(C^{\cdot}_M(I))$ it follows that
 ${\rm{R}} \Gamma_\mathfrak{m}(C^{\cdot}_M(I))$ is exact. This finishes the proof of the Lemma.

\end{proof}

\begin{cor}\label{4.x}
Let $M\neq 0$ be a maximal Cohen-Macaulay module over a Cohen-Macaulay ring $R$ of dimension $n$ and $I\subseteq R$ be an ideal of $c = \grade(I,M).$  Then the following are equivalent:
\begin{itemize}
\item[(1)] The natural homomorphism
\[
\Ext^{i-c}_R(k,H^c_I(M))\rightarrow \Ext^{i}_R(k, M)
\]
is an isomorphism for $i \geq n$ and $\Ext^i_R(k,H^c_I(M))=0$ for all $i< d=n-c$.
\item[(2)] The natural homomorphism
\[
H^d_{\mathfrak m} (H^c_I(M))\rightarrow H^n_{\mathfrak m} (M)
\]
is an isomorphism and $H^i_{\mathfrak m} (H^c_I(M))= 0$ for all $i\neq d$ where $d= n-c$.
\end{itemize}
\end{cor}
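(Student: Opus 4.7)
The plan is to deduce this corollary directly from Lemma \ref{w4}. The two conditions in Lemma \ref{w4} are phrased as isomorphisms (for all $i \in \mathbb{Z}$ in one case, for all $i \leq n$ in the other), whereas in Corollary \ref{4.x} they are formulated with the isomorphism restricted to the top degree together with vanishing below the critical degree. The Cohen-Macaulay hypotheses on $R$ and $M$ supply exactly the data needed to translate between the two formulations, so the result will drop out once the indices are unravelled.

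First I would collect the auxiliary vanishings. Since $M$ is maximal Cohen-Macaulay over the $n$-dimensional Cohen-Macaulay ring $R$, one has $\depth M = \dim M = n$, hence $\Ext^i_R(k,M) = 0$ for $i < n$ and $H^i_{\mathfrak m}(M) = 0$ for $i \neq n$. Moreover $\dim_R H^c_I(M) \leq d = n - c$ by \cite{k}, so $H^j_{\mathfrak m}(H^c_I(M)) = 0$ automatically for all $j > d$. These three facts are the bridge between the two formulations.

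Next I would show that condition $(1)$ of the corollary is equivalent to condition $(1)$ of Lemma \ref{w4}. The isomorphism for $i \geq n$ is given. For $c \leq i < n$ both sides of the natural map vanish (the target because $\depth M = n$, the source by the assumed vanishing since $i - c < d$), so the map is trivially an isomorphism; for $i < c$ both terms are zero for degree reasons. Conversely, if the map is an isomorphism for all $i$, then for $i < n$ the right-hand side $\Ext^i_R(k,M)$ vanishes, forcing $\Ext^{i-c}_R(k,H^c_I(M)) = 0$ for all $i - c < d$.

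The local cohomology side is entirely analogous. Condition $(2)$ of the corollary implies condition $(2)$ of Lemma \ref{w4}: for $i = n$ the isomorphism is given, while for $i < n$ both $H^i_{\mathfrak m}(M)$ and $H^{i-c}_{\mathfrak m}(H^c_I(M))$ vanish (the latter because $i - c \neq d$). Conversely, Lemma \ref{w4}$(2)$ yields the isomorphism at $i = n$ directly, while the vanishing $H^j_{\mathfrak m}(H^c_I(M)) = 0$ for $j < d$ follows via $H^j_{\mathfrak m}(H^c_I(M)) \cong H^{j+c}_{\mathfrak m}(M) = 0$; the complementary vanishing for $j > d$ is the consequence of the dimension bound noted above. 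The corollary then follows at once from Lemma \ref{w4}. The genuine content has already been established in that lemma, so here the only obstacle is careful bookkeeping of degree ranges.
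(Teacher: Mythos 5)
Your proposal is correct and follows the paper's route exactly: the paper disposes of Corollary \ref{4.x} as an immediate consequence of Lemma \ref{w4}, and your argument simply spells out the degree bookkeeping (using $\depth M=\dim M=n$ and $\dim_R H^c_I(M)\leq d$) that makes the two formulations match.
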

\begin{proof} This is an immediate consequence of Lemma \ref{w4}.
\end{proof}

 In case we have $\id_R (M) < \infty$ for the $R$-module $M$ in Corollary \ref{4.x} we get the
 following Corollary. Note that this Corollary provides us a new relation between the canonical module and the Ext module.
\begin{cor}\label{4.5}
Let $M\neq 0$ be a maximal Cohen-Macaulay module of finite injective dimension over a local ring $R$ of dimension $n$ and $I\subseteq R$ be an ideal of $c = \grade(I,M).$  Then the following are equivalent:
\begin{itemize}
\item[(1)] The natural homomorphism
\[
\Ext^{d}_R(k,H^c_I(M))\rightarrow \Ext^{n}_R(k, M)
\]
is an isomorphism and $\Ext^i_R(k,H^c_I(M))=0$ for all $i\neq d=n-c$.
\item[(2)] The natural homomorphism
\[
H^d_{\mathfrak m} (H^c_I(M))\rightarrow H^n_{\mathfrak m} (M)
\]
is an isomorphism and $H^i_{\mathfrak m} (H^c_I(M))= 0$ for all $i\neq d$ where $d= n-c$.
\item[(3)] The natural homomorphism
\[
K({\hat{M}})\rightarrow \Ext^{c}_R(H^c_I(M), K({\hat{R}}))
\]
is an isomorphism and $\Ext^{n-i}_R(H^c_I(M), K({\hat{R}}))= 0$ for all $i\neq d$.
\end{itemize}
\end{cor}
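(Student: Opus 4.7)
The plan is to prove the two equivalences $(1) \Leftrightarrow (2)$ and $(2) \Leftrightarrow (3)$ separately. The essential preliminary observation is that the hypothesis $\id_R M < \infty$ forces $R$ to be Cohen-Macaulay (Bass conjecture, as proved by Roberts), so that the hypotheses of Corollary \ref{4.x} and Lemma \ref{3.1} are in force. Moreover, the Bass formula then gives $\id_R M = \depth R = n$, so that $\Ext^i_R(k,M) = 0$ for all $i > n$; this is what bridges the indexing conventions of $(1)$ here with those of Corollary \ref{4.x}.

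For $(1) \Leftrightarrow (2)$ I would reduce to Corollary \ref{4.x}. In the direction $(1) \Rightarrow$ Corollary \ref{4.x}$(1)$, the isomorphism at $i = n$ is given, and for $i > n$ both $\Ext^i_R(k,M)$ (by the Bass formula observation) and $\Ext^{i-c}_R(k, H^c_I(M))$ (by the given vanishing, since $i-c > d$) are zero, making the natural map trivially an isomorphism in that range, while the vanishing for $i < d$ is directly given. Conversely, from Corollary \ref{4.x}$(1)$ the isomorphism at indices $i > n$ combined with $\Ext^i_R(k,M) = 0$ for $i > n$ forces $\Ext^{i-c}_R(k, H^c_I(M)) = 0$ for $i > n$, i.e.\ in degrees exceeding $d$, and together with the given vanishing in degrees below $d$ this yields the vanishing required in $(1)$. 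Corollary \ref{4.x} now gives the equivalence with $(2)$.

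For $(2) \Leftrightarrow (3)$ I would apply Local Duality (Lemma \ref{3.1}) directly. Setting $X = H^c_I(M)$ turns the vanishing $H^i_{\mathfrak m}(H^c_I(M)) = 0$ for $i \neq d$ into $\Ext^{n-i}_R(H^c_I(M), K(\hat R)) = 0$ for $i \neq d$. Setting $X = M$ at $i = n$, and using the standard identification $\Hom_R(M, K(\hat R)) \cong K(\hat M)$, valid because $\hat M$ is maximal Cohen-Macaulay over the Cohen-Macaulay ring $\hat R$ (so higher $\Ext$ into $K(\hat R)$ vanishes by Ischebeck), gives $\Hom_R(H^n_{\mathfrak m}(M), E) \cong K(\hat M)$; while setting $X = H^c_I(M)$ at $i = d$ gives $\Hom_R(H^d_{\mathfrak m}(H^c_I(M)), E) \cong \Ext^c_R(H^c_I(M), K(\hat R))$. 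Since the Matlis dual $\Hom_R(\cdot, E)$ is a faithfully exact functor, a morphism is an isomorphism iff its Matlis dual is, so the isomorphism statement in $(2)$ is equivalent to that in $(3)$, and similarly for the vanishing clauses.

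The main point requiring a careful check is that the natural morphism of $(3)$ is indeed the Matlis dual of the natural morphism of $(2)$. Both arise from the truncation short exact sequence of Definition \ref{ws} by applying two different functors (namely $\Rgam(\cdot)$ for part (a) and $\Hom_R(\cdot, E^{\cdot}_R(K(\hat R)))$ for part (c) of Theorem \ref{4.2}), and the identification is a consequence of the naturality of the Local Duality isomorphism of Lemma \ref{3.1}. This is the only place where some bookkeeping is needed, but no new idea is required beyond invoking the functoriality of the constructions already set up.
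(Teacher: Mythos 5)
Your proposal is correct and follows the paper's own route: after invoking Roberts' theorem to get $R$ Cohen-Macaulay and the Bass formula to get $\id_R(M)=n$ (hence $\Ext^i_R(k,M)=0$ for $i\neq n$), the equivalence $(1)\Leftrightarrow(2)$ is reduced to Corollary \ref{4.x}, and $(2)\Leftrightarrow(3)$ is obtained from the generalized Local Duality of Lemma \ref{3.1} together with Matlis duality. Your version merely spells out the index bookkeeping and the naturality check that the paper leaves implicit.
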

\begin{proof}
Since $M$ is of finite injective dimension so by \cite{r} it follows that $R$ is Cohen-Macaulay ring. Note that the equivalence of $(1)\Leftrightarrow (2)$ follows form Corollary \ref{4.x}. Recall that
$\Ext_R^i(k,M) = 0$ for all $i \not= n$ since $\id_R (M) = n$ under the assumption.

Next we proof the equivalence of $(2)\Leftrightarrow (3).$

In fact this is a consequence of the generalized Local Duality (see Lemma \ref{3.1}) and Matlis Duality.
\end{proof}
\begin{rem} \label{rem}
$(1)$ Let $M\neq 0$ be a maximal Cohen-Macaulay module of finite injective dimension over a local ring $R$. Suppose that $I\subseteq R$ is an ideal with $c = \grade(I,M).$ If any of the equivalent conditions holds of Corollary \ref{4.5} then it follows that all the Bass numbers of $H^c_I(R)$ are zero except for $i=d=\dim_R(M)-c.$ \\
$(2)$ Let $R$ be a complete local ring of dimension $n$ and $M\neq 0$ be a maximal Cohen-Macaulay module of finite injective dimension. Let $I\subseteq R$ be an ideal of
$c = \grade(I,M)$. Then $K(R)$ exists and by \cite[Theorem 4.1]{sharp} it follow that $M\cong \oplus K(R)$. Therefore if $R$ is complete so it is enough to prove Corollary \ref{4.5} for $K(R)$ instead of $M$.\\
$(3)$ In case $R$ possesses a maximal Cohen-Macaulay module of finite injective dimension $M\not=0$ the ring $R$ is Cohen-Macaulay and it follows that $\Supp_R M = \Spec R$. To this end first note that
$\Supp_{\hat{R}} \hat{M} = \Spec \hat{R}$ since $\hat{M}$ is isomorphic to a direct sum of $K(\hat{R})$
and $\Ann K(\hat{R}) = (0)$. This implies $\Ann_R M = 0$ which implies the claim.
\end{rem}
Now we can prove one of our main result as follows:
\begin{thm}\label{4}
Let $M\neq 0$ be a maximal Cohen-Macaulay module of $\id_R(M)< \infty$ over a local ring $R$ of dimension $n$ and $I\subseteq R$ be an ideal of $c = \grade(I,M).$ The following
conditions are equivalent:
\begin{itemize}
\item[(a)] $H^i_I(M)= 0$ for all $i\neq c.$
\item[(b)] For all ${\mathfrak p}\in  V(I)\cap \Supp_R(M)$ the natural homomorphism
\[
H^{h({\mathfrak p})}_{{\mathfrak p}R_{\mathfrak p}}(H^c_{IR_{\mathfrak p}}(M_{\mathfrak p})) \to H^{\dim(M_{\mathfrak p})}_{{\mathfrak p}R_{\mathfrak p}}(M_{\mathfrak p})
\]
is an isomorphism and $H^i_{{\mathfrak p}R_{\mathfrak p}}(H^c_{IR_{\mathfrak p}}(M_{\mathfrak p}))= 0$ for all $i\neq h({\mathfrak p})=\dim(M_{\mathfrak p})- c .$
\item[(c)] For all ${\mathfrak p}\in  V(I)\cap \Supp_R(M)$ the natural homomorphism

$$\Ext^{h({\mathfrak p})}_{R_{\mathfrak p}}(k({\mathfrak p}),H^c_{IR_{\mathfrak p}}(M_{\mathfrak p}))\rightarrow \Ext^{\dim(M_{\mathfrak p})}_{R_{\mathfrak p}}(k({\mathfrak p}), (M_{\mathfrak p}))$$
is an isomorphism and $\Ext^i_{R_{\mathfrak p}}(k({\mathfrak p}), H^c_{IR_{\mathfrak p}}(M_{\mathfrak p}))= 0$ for all $i\neq h({\mathfrak p})= \dim(M_{\mathfrak p})- c$.
\item[(d)] For all ${\mathfrak p}\in V(I)\cap \Supp_R(M)$ the natural homomorphism
\[
K({\hat{M}}_{\mathfrak p})\rightarrow \Ext^{c}_{{\hat{R}}_{\mathfrak p}}(H^c_{I{{\hat{R}}_{\mathfrak p}}}({\hat{M}}_{\mathfrak p}), K({\hat{R}}_{\mathfrak p}))
\]
is an isomorphism and $\Ext^{i}_{{\hat{R}}_{\mathfrak p}}(H^c_{I{{\hat{R}}_{\mathfrak p}}}({\hat{M}}_{\mathfrak p}), K({\hat{R}}_{\mathfrak p}))= 0$ for all $i\neq h({\mathfrak p}).$
\item[(e)] $H^i_I(R)= 0$ for all $i\neq c,$ that is $I$ is a cohomologically complete intersection.
\end{itemize}

\end{thm}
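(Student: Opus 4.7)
I would prove the cycle $(a)\Rightarrow (b)\Leftrightarrow (c)\Leftrightarrow (d)\Rightarrow (a)$ using the tools already developed in the paper, and then establish $(a)\Leftrightarrow (e)$ as a separate, genuinely new step. The passage between the conditions on $M$ and the condition (e) on $R$ is the main obstacle.

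\textbf{Steps $(a)\Rightarrow (b)$ and $(b)\Leftrightarrow (c)\Leftrightarrow (d)$.} The first is Lemma \ref{4.6} applied directly. For the pointwise equivalences, fix $\mathfrak{p} \in V(I) \cap \Supp_R M$: the module $M_\mathfrak{p}$ is then a nonzero maximal Cohen--Macaulay $R_\mathfrak{p}$-module with $\id_{R_\mathfrak{p}}(M_\mathfrak{p}) < \infty$, so Corollary \ref{4.5} yields the pointwise equivalence of the three local conditions at $\mathfrak{p}$; taking the conjunction over all such $\mathfrak{p}$ gives the global equivalences.

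\textbf{Step $(b)\Rightarrow (a)$.} I would argue by contradiction using the truncation complex $C^\cdot_M(I)$ of Definition \ref{ws}. Assume (a) fails, so some $H^i(C^\cdot_M(I))\cong H^i_I(M)$ is nonzero (for $i>c$). Pick $\mathfrak{p}$ minimal in $\bigcup_i \Supp_R H^i(C^\cdot_M(I))$. Minimality forces every cohomology of the localized complex $C^\cdot_{M_\mathfrak{p}}(IR_\mathfrak{p})$ to have support in $\{\mathfrak{p}R_\mathfrak{p}\}$, so Lemma \ref{4.3} gives $H^i_{\mathfrak{p}R_\mathfrak{p}}(C^\cdot_{M_\mathfrak{p}}(IR_\mathfrak{p}))\cong H^i(C^\cdot_{M_\mathfrak{p}}(IR_\mathfrak{p}))$, not all zero. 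But $(b)$ at $\mathfrak{p}$, interpreted through Theorem \ref{4.2}(a) and Lemma \ref{w4}, asserts exactly that $\mathrm{R}\Gamma_{\mathfrak{p}R_\mathfrak{p}}(C^\cdot_{M_\mathfrak{p}}(IR_\mathfrak{p}))$ is acyclic --- a contradiction.

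\textbf{Step $(a)\Leftrightarrow (e)$, the main obstacle.} Since $\id_R(M)<\infty$, Roberts' theorem forces $R$ to be Cohen--Macaulay and, by Remark \ref{rem}(3), $\Supp_R M=\Spec R$. Write $S=\hat R$ and $K=K(S)$; by Remark \ref{rem}(2), $\hat M\cong K^{\oplus s}$ for some $s\geq 1$, so faithfully flat completion turns (a) into ``$H^i_{IS}(K)=0$ for all $i\neq c$'' and (e) into ``$H^i_{IS}(S)=0$ for all $i\neq c$''. Thus it suffices to establish equivalence of these two vanishing statements over the complete Cohen--Macaulay ring $S$ with canonical module $K$. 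The engine is the $K$--$S$ duality $\mathrm{R}\Hom_S(K,K)\simeq S$, yielding the natural derived-category isomorphisms
\[
\mathrm{R}\Gamma_{IS}(S)\simeq \mathrm{R}\Hom_S\bigl(K,\mathrm{R}\Gamma_{IS}(K)\bigr) \quad\text{and}\quad \mathrm{R}\Gamma_{IS}(K)\simeq \mathrm{R}\Gamma_{IS}(S)\otimes^L_S K.
\]
Given (e), the right side of the second is $H^c_{IS}(S)[-c]\otimes^L K$, whose cohomology $\mathrm{Tor}^S_{c-i}(H^c_{IS}(S),K)$ in degree $i$ vanishes automatically for $i>c$ (negative Tor) and for $i<c$ (since $\grade(IS,K)=c$ already forces $H^i_{IS}(K)=0$ for $i<c$), delivering (a) for $K$. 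Conversely, given (a) for $K$, the first isomorphism gives $\mathrm{R}\Gamma_{IS}(S)\simeq \mathrm{R}\Hom_S(K,H^c_{IS}(K))[-c]$, so (e) is equivalent to $\Ext^j_S(K,H^c_{IS}(K))=0$ for all $j>0$. This last vanishing is the technical heart of the proof: I would extract it from Corollary \ref{4.4}(c) applied to $K$ (which produces $\mathrm{R}\Hom_S(H^c_{IS}(K),K)\simeq S[-c]$) together with the canonical-type Bass-number table $\mu_i(\mathfrak{p},H^c_{IS}(K))=\delta_{i,h(\mathfrak{p})}$ for $\mathfrak{p}\in V(IS)$, obtained by localizing the Ext vanishings at each such $\mathfrak{p}$. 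Controlling $\mathrm{R}\Hom_S(K,-)$ on the non-finitely-generated module $H^c_{IS}(K)$ through this explicit injective-resolution structure is where I expect the main technical difficulty to arise.
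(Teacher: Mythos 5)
Your handling of $(a)\Rightarrow(b)$ (Lemma \ref{4.6}), of the pointwise equivalences $(b)\Leftrightarrow(c)\Leftrightarrow(d)$ (Corollary \ref{4.5} localized at each $\mathfrak{p}\in V(I)\cap\Supp_R M$), and of $(b)\Rightarrow(a)$ is sound and is essentially the paper's own argument: your minimal-counterexample prime replaces the paper's induction on $\dim(M/IM)$, but the ingredients (the localized truncation complex, Lemma \ref{4.3}, Theorem \ref{4.2}(a)) are the same, so nothing new is needed there.

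The genuine gap is in $(a)\Leftrightarrow(e)$, specifically in $(a)\Rightarrow(e)$. After completing and reducing to $S=\hat R$, $K=K(S)$, you arrive at $H^i_{IS}(S)\cong\Ext^{i-c}_S(K,H^c_{IS}(K))$ and must prove $\Ext^j_S(K,H^c_{IS}(K))=0$ for all $j>0$; you explicitly leave this as the ``technical heart'' and only sketch an attack via Corollary \ref{4.4}(c) and the Bass-number table of $H^c_{IS}(K)$. That is not a proof, and the sketched route is genuinely problematic: even granting $\mu_i(\mathfrak{p},H^c_{IS}(K))=0$ for $i\neq h(\mathfrak{p})$ and $\mathfrak{p}\in V(IS)$ (which does follow from the already-proved condition $(c)$), the terms $\Hom_S(K,E_S(S/\mathfrak{p}))$ of the complex $\Hom_S(K,E^{\cdot})$ are Matlis duals of $K_{\mathfrak{p}}$ over $S_{\mathfrak{p}}$, not injective $S$-modules, so concentration of the Bass numbers does not by itself compute $H^j(\Hom_S(K,E^{\cdot}))$ --- you would in effect have to redo the Hellus--Schenzel analysis; moreover a priori primes outside $V(IS)$ could occur in the minimal injective resolution of $H^c_{IS}(K)$ and are not controlled by your table, and Corollary \ref{4.4}(c) only controls $\Ext^j_S(H^c_{IS}(K),K)$ for $j\geq c$, not the claimed full identification with $S[-c]$. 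The paper avoids all of this: by Remark \ref{rem}(3), $\Supp_R M=\Spec R$ (since $\hat M$ is a direct sum of copies of $K(\hat R)$ and $\Ann K(\hat R)=0$), and then Proposition \ref{2.7} --- resting on Proposition \ref{2.5}(b), i.e.\ $\cd(I,N)\leq\cd(I,M)$ whenever $\Supp_R N\subseteq\Supp_R M$ --- gives $\grade(I)=\grade(I,M)=c$ and $\cd(I)=\cd(I,M)$, so $(a)\Leftrightarrow(e)$ is immediate in both directions with no duality at all. Your $(e)\Rightarrow(a)$ argument via ${\rm R}\Gamma_{IS}(K)\simeq{\rm R}\Gamma_{IS}(S)\otimes^{L}_S K$ is correct but likewise unnecessary; if you want to salvage your framework for $(a)\Rightarrow(e)$, note that the same support comparison $\cd(IS,S)\leq\cd(IS,K)$ closes the gap instantly, whereas the duality computation you propose does not.
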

\begin{proof} Firstly we prove the equivalence of (a) and (e).  By Remark \ref{rem} the
assumptions imply that $\Supp_R M = \Spec R$ and the equivalence follows
by Proposition \ref{2.7}.

For the proof that $(a)$ implies $(b)$ see Lemma \ref{4.6}. By Corollary \ref{4.5} the equivalence of $(b),(c),(d)$ is easily seen by passing to the localization.

Now we prove that $(b)$ implies $(a)$

We proceed by induction on $\dim(M/IM)$. If $\dim(M/IM)= 0$ then $V(I)\cap \Supp_R(M)
\subseteq V(\mathfrak m)$ so statement $(b)$ holds for
$\mathfrak p= \mathfrak m$. By the definition of the truncation complex it implies that $H^i_\mathfrak m(C^{\cdot}_M(I))= 0$ for all $i\in \mathbb Z.$

Now we apply Lemma \ref{4.3} for $X= C^{\cdot}_M(I)$. Because of $\Supp_R(H^i(C^{\cdot}_M(I)))\subseteq V(\mathfrak m)$ (recall that $H^i(C^{\cdot}_M(I))\cong H^i_I(M)$ for all $i\neq c$ and $H^c(C^{\cdot}_M(I))= 0$ (see the definition 4.1)). This proves the vanishing of $H^i_I(M)$ for all $i\neq c$.

Now let $\dim(M/IM)> 0$ then
\[
\dim(M_{\mathfrak p}/IM_{\mathfrak p})< \dim(M/IM)
\]
for all ${\mathfrak p} \in V(I)\cap \Supp_R(M) \setminus \{ \mathfrak m\}$. By the induction hypothesis it implies that
\[
H^i_{IR_{\mathfrak p}}(M_{\mathfrak p})= 0
\]
for all $i\neq c$ and all ${\mathfrak p} \in V(I)\cap \Supp(M)\setminus \{ \mathfrak m\}$. That is $\Supp(H^i_I(M))\subseteq V({\mathfrak m})$ for all $i\neq c$. It implies that $H^i_{\mathfrak m}(C^{\cdot}_M(I)) = 0 $ for $i \leq c$ and $i > n$. Also for $c < i \leq n$ there is an isomorphism
\[
 H^i_{\mathfrak m}(C^{\cdot}_M(I)) \cong H^i(C^{\cdot}_M(I)) \cong H^i_I(M).
\]
On the other side note that by the assumption for $\mathfrak{p}= \mathfrak{m}$ we have $H^i_{\mathfrak m}(C^{\cdot}_M(I))= 0$ for all $i\in \mathbb{Z}$ (by Theorem \ref{4.2}) and hence by virtue of the last isomorphism it gives the result. This completes the proof of the Theorem.
\end{proof}

\begin{rem}
Let us discuss the necessity of the local conditions in Theorem \ref{4}. It is not enough to assume the statements $(b)$, $(c)$, $(d)$  in the Theorem \ref{4} for ${\mathfrak p}= {\mathfrak m}$. That is, we do need these statements for all ${\mathfrak p} \in V(I)\cap \Supp(M)$. This is shown by Hellus and Schenzel (see \cite[Example 4.1]{pet1}) where $(b)$ is true for ${\mathfrak p}= {\mathfrak m}$ but not for a localization at ${\mathfrak p}$. That is, these three properties $(b)$, $(c)$, $(d)$ do not localize.
\end{rem}

In the next we are interested in the injective dimension of the local cohomology module. As a consequence it provides a new characterization of a local ring to be Cohen-Macaulay.
\begin{thm}\label{9}
Let $(R,\mathfrak m)$ be a ring and $I$ be an ideal of $R$. Suppose that $M \not= 0$ is a finitely generated $R$-module such that $H^i_I(M)= 0$ for all $i\neq c= \grade(I,M)$ then the following are equivalent:
\begin{itemize}
\item[(1)] $\id_R(M)< \infty$.
\item[(2)]  $\id_R(H^c_I(M))< \infty$.
\end{itemize}
Each of the equivalent conditions implies that $R$ is a Cohen-Macaulay ring.
\end{thm}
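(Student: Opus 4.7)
The plan is to prove both implications of the theorem through a single shift isomorphism
\[
\Ext^i_R(k, M) \cong \Ext^{i-c}_R(k, H^c_I(M)) \quad \text{for all } i \in \mathbb Z.
\]
With this in hand, both directions of the equivalence are immediate, and the conclusion that $R$ is Cohen-Macaulay will follow from Bass's conjecture, resolved by Roberts \cite{r}.

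To establish the shift isomorphism, I would take a minimal injective resolution $E^{\cdot}_R(M)$ of $M$ and apply $\Gamma_I$. Since $\grade(I,M)=c$, the discussion at the start of Section 4 gives $\Gamma_I(E^{\cdot}_R(M))^i = 0$ for $i<c$. For each indecomposable summand $E_R(R/\mathfrak p)$ of $E^i_R(M)$ one has $\Hom_R(k, E_R(R/\mathfrak p))= 0$ unless $\mathfrak p = \mathfrak m$; and since $\mathfrak m \in V(I)$, $\Gamma_I$ fixes $E_R(R/\mathfrak m)$. Hence there is a termwise equality of complexes
\[
\Hom_R(k, E^{\cdot}_R(M)) = \Hom_R(k, \Gamma_I(E^{\cdot}_R(M))).
\]
The left hand side computes $\Ext^i_R(k,M)$. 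On the right, $\Gamma_I(E^{\cdot}_R(M))$ is a complex of injectives whose cohomology, by the hypothesis $H^j_I(M)=0$ for $j\neq c$, is concentrated in degree $c$ with value $H^c_I(M)$; therefore it is an injective resolution of $H^c_I(M)[-c]$, and so its cohomology under $\Hom_R(k,\cdot)$ is $\Ext^{i-c}_R(k, H^c_I(M))$, giving the shift isomorphism.

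For $(1)\Rightarrow(2)$, choose $E^{\cdot}_R(M)$ of finite length $s=\id_R(M)$. Then $\Gamma_I(E^{\cdot}_R(M))$ is a bounded complex of injectives concentrated in degrees $[c,s]$ and, by the preceding paragraph, its shift by $c$ is an injective resolution of $H^c_I(M)$ of length at most $s-c$; hence $\id_R(H^c_I(M))\leq s-c<\infty$. For $(2)\Rightarrow(1)$, if $t=\id_R(H^c_I(M))<\infty$ then $\Ext^j_R(k, H^c_I(M))=0$ for $j>t$, and the shift isomorphism forces $\Ext^i_R(k,M)=0$ for $i>t+c$. Since $M$ is finitely generated over the local ring $R$, the standard identity $\id_R(M)=\sup\{i:\Ext^i_R(k,M)\neq 0\}$ yields $\id_R(M)\leq t+c<\infty$.

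Under either equivalent condition, $M$ is a nonzero finitely generated $R$-module of finite injective dimension, so $R$ is Cohen-Macaulay by Bass's conjecture \cite{r}. The only non-routine ingredient is the shift isomorphism; the rest reduces to the definition of injective dimension and the classical Ext-characterization of $\id_R$ for finitely generated modules over a local ring.
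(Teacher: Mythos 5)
Your proposal is correct and follows essentially the same route as the paper: you observe that $\Gamma_I(E^{\cdot}_R(M))$ is a (left-bounded) complex of injectives that, under the hypothesis, is an injective resolution of $H^c_I(M)[-c]$, deduce the shift isomorphism $\Ext^i_R(k,M)\cong \Ext^{i-c}_R(k,H^c_I(M))$ for $(2)\Rightarrow(1)$, read off $(1)\Rightarrow(2)$ directly from the truncated finite resolution, and invoke Roberts' theorem for the Cohen--Macaulay conclusion. Your explicit justification of $\Hom_R(k,E^{\cdot}_R(M))=\Hom_R(k,\Gamma_I(E^{\cdot}_R(M)))$ merely spells out a step the paper leaves implicit, so no substantive difference.
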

\begin{proof}
Let $E^{\cdot}_R(M)$ be a minimal injective resolution of $M$. Since $H^i_I(M)= 0$ for all $i\neq c$ so $H^c_I(M)[-c]\to \Gamma_I(E^{\cdot}_R(M))$ is an isomorphism of complexes in cohomology. If $\id_R(M)< \infty$, then $\Gamma_I(E^{\cdot}_R(M))$ is a finite resolution of $H^c_I(M)[-c]$ by injective $R$-modules. This proves $(1) \Rightarrow (2)$.

For the converse statement note that the above quasi-isomorphism of complexes induces the following isomorphism
\[
\Ext^{i-c}_R(k,H^c_I(M))\cong \Ext^i_R(k,M)
\]
so it follows that $\id_R(M)< \infty$ since $\Ext^{i-c}_R(k,H^c_I(M))= 0$ for $i\gg 0$.

Note that if one of the equivalent conditions in Theorem \ref{9} hold then $R$ will be a Cohen-Macaulay  ring (see \cite{r}).
\end{proof}

\section{Applications}

In this section we will give some applications of our main results one of which is an extension of Hellus and Schenzel's result (see \cite[Lemma 4.3]{pet1}).
\begin{prop}
With the notation in Theorem $\ref{4}$ suppose in addition that $K(R)$ exists and consider the following condition:
\begin{itemize}
\item[(f)] $H^i_I(K(R))= 0$ for all $i\neq c$.
\end{itemize}
then $(f)$ is equivalent to all the conditions of Theorem $\ref{4}$.
\end{prop}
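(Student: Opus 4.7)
The plan is to reduce the proposition directly to Theorem \ref{4} applied with $M$ replaced by $K(R)$. Since Theorem \ref{4} establishes the equivalence of condition (a), namely $H^i_I(N) = 0$ for all $i\neq c$, with condition (e), namely $H^i_I(R) = 0$ for all $i\neq c$, whenever $N$ is any maximal Cohen-Macaulay $R$-module of finite injective dimension with $\grade(I,N) = c$, it suffices to verify that $K(R)$ plays the role of such an $N$.

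First I would check the hypotheses of Theorem \ref{4} for $K(R)$. By assumption a maximal Cohen-Macaulay module $M\neq 0$ of finite injective dimension exists, so $R$ is Cohen-Macaulay by the Roberts resolution of the Bass conjecture (compare Remark \ref{rem}(3) and Theorem \ref{9}). Since $K(R)$ is assumed to exist, it is by definition a maximal Cohen-Macaulay module with $\id_R K(R) = n = \dim R < \infty$ and $\Ann_R K(R) = 0$; in particular $\Supp_R K(R) = \Spec R$.

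Next I would identify the grade. Using Proposition \ref{2.5}(a) together with the fact that both $R$ and $K(R)$ are maximal Cohen-Macaulay with support equal to all of $\Spec R$, one obtains
\[
\grade(I, K(R)) = \inf\{\depth R_{\mathfrak p} : \mathfrak p \in V(I)\} = \grade(I, R) = \grade(I, M) = c,
\]
the last equality being the same computation applied to $M$ (whose support is also all of $\Spec R$ by Remark \ref{rem}(3)).

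Finally, Theorem \ref{4} applied to the module $K(R)$ now yields
\[
H^i_I(K(R)) = 0 \text{ for all } i\neq c \;\Longleftrightarrow\; H^i_I(R) = 0 \text{ for all } i\neq c,
\]
which is precisely the equivalence of (f) with condition (e) of Theorem \ref{4}, and hence with all the other conditions stated there. There is no real obstacle here beyond the bookkeeping of the hypotheses; the content of the proposition is exactly the observation that $K(R)$ is a legitimate choice for the module $M$ in Theorem \ref{4} whenever it exists.
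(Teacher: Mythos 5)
Your proposal is correct and follows essentially the same route as the paper: observe that $R$ is Cohen--Macaulay (Roberts), that $K(R)$ is then a maximal Cohen--Macaulay module of finite injective dimension with full support and $\grade(I,K(R))=c$, and apply Theorem \ref{4} to $K(R)$, linking condition (f) to the ring condition (e). The only difference is cosmetic: the paper cites Proposition \ref{2.7} for the grade/cohomological-dimension bookkeeping, while you verify it directly via Proposition \ref{2.5}(a); the substance is identical.
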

\begin{proof}
If $R$ is a Cohen-Macaulay ring then $K(R)$ is a maximal Cohen-Macaulay module of finite injective dimension. Then the equivalence follows from Theorem \ref{4} by view of Proposition \ref{2.7}.
\end{proof}
Now the following Proposition is a generalization of an application of Hellus and Schenzel (see \cite[Lemma 4.3]{pet1}).
\begin{prop}
Let $(R,\mathfrak m)$ be a ring, $I\subseteq R$ be an ideal and $M\neq 0$ be an $R$-module such that $\grade(I, M)= c$. Suppose that $x\in I$ is $M$-regular then the following are equivalent:
\begin{itemize}
\item[(a)] $H^i_{IR/xR}(M/xM)= 0$ for all $i\neq c-1$.
\item[(b)] $H^i_I(M)= 0$ for all $i\neq c$ and $x$ is $\Hom_R(H^c_I(M), $E$)$- regular.
\end{itemize}
\end{prop}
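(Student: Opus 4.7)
The plan is to exploit the short exact sequence $0 \to M \xrightarrow{x} M \to M/xM \to 0$ and the associated long exact sequence of local cohomology
\[
\cdots \to H^i_I(M) \xrightarrow{x} H^i_I(M) \to H^i_I(M/xM) \to H^{i+1}_I(M) \to \cdots
\]
together with the standard identification $H^i_{IR/xR}(M/xM) \cong H^i_I(M/xM)$ (the modules agree because $xR \subseteq I$, so an element of $M/xM$ is $I(R/xR)$-torsion iff it is $I$-torsion). The other input is Matlis duality: for any map $f: X \to X$, the functor $\Hom_R(\cdot, E)$ transforms $f$ into multiplication of the same scalar on the dual, and $f$ is surjective iff $\Hom_R(f,E)$ is injective (this is where $E$ being an injective cogenerator is used).

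For $(a)\Rightarrow (b)$, first I would conclude from the long exact sequence that for every $i \neq c-1, c$ the hypothesis $H^{i-1}_I(M/xM)= H^i_I(M/xM)= 0$ forces multiplication by $x$ to be an isomorphism on $H^i_I(M)$; since $H^i_I(M)$ is $I$-torsion, hence $x$-torsion, multiplication by $x$ is also locally nilpotent, so $H^i_I(M)=0$. Combined with $H^i_I(M)=0$ for $i<c$ (by definition of $\grade(I,M)=c$), this gives the vanishing part of (b). The remaining pieces of the long exact sequence collapse to
\[
0 \to H^{c-1}_I(M/xM) \to H^c_I(M) \xrightarrow{x} H^c_I(M) \to 0,
\]
so multiplication by $x$ on $H^c_I(M)$ is surjective; dualizing with $\Hom_R(\cdot,E)$ yields that $x$ is a nonzerodivisor on $\Hom_R(H^c_I(M),E)$.

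For $(b)\Rightarrow (a)$, the regularity hypothesis on $\Hom_R(H^c_I(M), E)$ is, by Matlis duality, equivalent to the surjectivity of $\mu_x$ on $H^c_I(M)$. Feeding the vanishing $H^i_I(M)=0$ for $i\neq c$ back into the long exact sequence, every segment outside $i=c-1,c$ immediately gives $H^i_I(M/xM)=0$, while the segment at $i=c$ reads
\[
H^c_I(M) \xrightarrow{x} H^c_I(M) \to H^c_I(M/xM) \to 0,
\]
so the surjectivity just established gives $H^c_I(M/xM)= 0$. Thus $H^i_I(M/xM)=0$ for all $i\neq c-1$, which via the identification above is exactly (a).

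There is no real obstacle here; the only point deserving some care is the translation between the surjectivity of $\mu_x$ on $H^c_I(M)$ and the nonzerodivisor property on its Matlis dual, which I would state once as a small lemma and then use in both directions.
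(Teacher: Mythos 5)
Your proposal is correct and follows essentially the same route as the paper: the long exact sequence of $H^\bullet_I(-)$ arising from $0 \to M \xrightarrow{x} M \to M/xM \to 0$, the observation that an injective (or bijective) multiplication by $x$ on an $I$-torsion module forces it to vanish, the Independence of Base identification $H^i_{IR/xR}(M/xM)\cong H^i_I(M/xM)$, and Matlis duality with $E$ to translate between surjectivity of $\mu_x$ on $H^c_I(M)$ and $x$ being a nonzerodivisor on $\Hom_R(H^c_I(M),E)$. The only cosmetic difference is that you dispose of the degree $c-1$ case via $\grade(I,M)=c$ while the paper runs the torsion argument for all $i<c$; this changes nothing of substance.
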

\begin{proof}
Since $x$ is $M$-regular so we have the following short exact sequence

$$0\rightarrow M\mathop\rightarrow\limits^x M\rightarrow M/xM\rightarrow 0$$

Applying $\Gamma_I$ to this sequence yields the following long exact sequence of local cohomologies
\begin{equation}
...\rightarrow H^{i-1}_I(M/xM)\rightarrow H^i_I(M)\mathop\rightarrow\limits^x H^i_I(M)\rightarrow
H^i_I(M/xM)\rightarrow ...
\end{equation}
Now we prove that $(a)$ implies $(b)$.

If $i< c$ then there is the following exact sequence
\begin{equation}
0\rightarrow H^i_I(M)\mathop\rightarrow\limits^x H^i_I(M)
\end{equation}
Let $r\in H^i_I(M)$ then there exists $n\in \mathbb N$ such that $rI^n= 0$ it implies that $r= 0$ because of
sequence 5.2. It follows that $H^i_I(M)= 0$ for all $i< c.$ Similarly we can prove that $H^i_I(M)= 0$ for all $i> c$ so $H^i_I(M)= 0$ for all $i\neq c$. Now applying $\Hom_R(-,E)$ to sequence 5.1 and substituting $i= c$ then $x$ is $\Hom_R(H^c_I(M),E)$- regular follows from the following short exact sequence
\[
0\rightarrow \Hom_R(H^c_I(M),E)\mathop\rightarrow\limits^x \Hom_R(H^c_I(M),E) \rightarrow \Hom_R(H^{c-1}_{IR/xR}(M/xM),E)\rightarrow 0
\]
Now we will prove that the assertion $(b)$ implies $(a)$.

If $i\leq {c-2}$ or $i> c$ then it follows from sequence 5.1 and Independence of Base Theorem (see \cite{h}) that $H^i_{IR/xR}(M/xM)$= 0 for all $i\leq {c-2}$ or $i> c $. If $i= c$ then there is the following exact sequence
\[
H^c_I(M)\mathop\rightarrow^x H^c_I(M)\rightarrow H^c_{IR/xR}(M/xM)\rightarrow 0
\]
it induces the following exact sequence
\[
0\rightarrow \Hom_R(H^c_{IR/xR}(M/xM),E)\rightarrow \Hom_R(H^c_I(M),E) \mathop\rightarrow\limits^x \Hom_R(H^c_I(M),E)
\]
since $x$ is $\Hom_R(H^c_I(M),E)$- regular so $\Hom_R(H^c_{IR/xR}(M/xM),E)$ being the kernel of the morphism $\Hom_R(H^c_I(M),E)\mathop\rightarrow\limits^x \Hom_R(H^c_I(M),E)$ is zero. Therefore result follows from \cite[Remark 3.11]{h}.
\end{proof}
\begin{cor}
Let $(R,\mathfrak m)$ be a ring, $I\subseteq R$ be an ideal of $\grade(I, M)= c$ for an $R$-module $M\neq 0$. Suppose that $\underline{x}=
(x_1,...,x_j)\in I$ is an $M$-regular sequence for $1\leq j\leq c$ then the following are equivalent:
\begin{itemize}
\item[(a)] $H^i_{IR/\underline{x}}(M/\underline{x}M)= 0$ for all $i\neq c-j$.
\item[(b)] $H^i_I(M)= 0$ for all $i\neq c$ and $\underline{x}$ is $\Hom_R(H^c_I(M), $E$)$- regular.
\end{itemize}
\end{cor}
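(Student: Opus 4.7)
The natural plan is induction on the length $j$ of the regular sequence, with the preceding Proposition furnishing the base case $j=1$. For the inductive step, set $\underline{x}' = x_1,\ldots,x_{j-1}$, $R' = R/\underline{x}'R$, $I' = IR'$ and $M' = M/\underline{x}'M$. Since $\underline{x}'$ is an $M$-regular sequence contained in $I$ and $\grade(I,M)=c$, the standard grade-shift under a regular element gives $\grade(I', M') = c-j+1$, and $\bar{x}_j \in I'$ is $M'$-regular. Moreover $R'/(\bar{x}_j)R' = R/\underline{x}R$ and $M'/\bar{x}_j M' = M/\underline{x}M$, so by the Independence of Base Theorem the local cohomology in condition $(a)$ may equally well be computed over $R'$.

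The strategy is then to apply the single-element Proposition to the data $(R', I', M', \bar{x}_j)$, converting $(a)$ into the conjunction $(\ast)$: $H^i_{I'}(M')=0$ for all $i\neq c-j+1$, together with $\bar{x}_j$ being $\Hom_{R'}(H^{c-j+1}_{I'}(M'),E_{R'})$-regular. The inductive hypothesis, applied to the sequence $\underline{x}'$ of length $j-1$, shows that the vanishing portion of $(\ast)$ is equivalent to $H^i_I(M)=0$ for all $i\neq c$ together with $\underline{x}'$ being $\Hom_R(H^c_I(M),E)$-regular. What remains is to match the $\bar{x}_j$-regularity in $(\ast)$ with $x_j$ being a non-zerodivisor on $\Hom_R(H^c_I(M),E)/\underline{x}'\Hom_R(H^c_I(M),E)$; combined with the established $\underline{x}'$-regularity this yields exactly the $\underline{x}$-regularity required in $(b)$.

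The key technical input for this last matching is the natural isomorphism
\[
\Hom_{R'}\!\bigl(H^{c-j+1}_{I'}(M'),E_{R'}\bigr)\;\cong\;\Hom_R(H^c_I(M),E)\bigm/\underline{x}'\Hom_R(H^c_I(M),E),
\]
which I would obtain by iterating, $j-1$ times, the short exact sequence exhibited in the proof of the Proposition, namely $0\to \Hom_R(H^c_I(M),E)\xrightarrow{x_k}\Hom_R(H^c_I(M),E)\to \Hom_R(H^{c-1}_{IR/(x_k)R}(M/x_k M),E)\to 0$, applied at each stage to the successive quotient module and invoking the vanishing and regularity hypotheses already secured. Compatibility of Matlis duality with a quotient ring is provided by the elementary observation that for $yN=0$ any $R$-homomorphism $N\to E$ factors through the $y$-torsion submodule of $E=E_R(k)$, which coincides with $E_{R/(y)}(k)$, so that $\Hom_R(N,E)\cong \Hom_{R/(y)}(N,E_{R/(y)})$ by Hom-tensor adjunction. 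The main obstacle is precisely this bookkeeping along the tower $R\to R/(x_1)\to\cdots\to R/\underline{x}R$; once the compatibility of Matlis duals is tracked, the inductive step is a direct translation of the base-case Proposition, and reversing all the implications gives $(b)\Rightarrow(a)$ as well.
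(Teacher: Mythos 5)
Your proposal is correct and takes what is essentially the paper's (implicit) route: the corollary is stated there without proof as the $j$-fold iteration of the preceding single-element Proposition, which is exactly your induction on $j$ with the base case $j=1$. The bookkeeping the paper leaves unsaid --- the identification $\Hom_{R'}(H^{c-j+1}_{I'}(M'),E_{R'})\cong \Hom_R(H^c_I(M),E)/\underline{x}'\Hom_R(H^c_I(M),E)$, obtained by dualizing the long exact sequences stage by stage (using faithfulness of Matlis duality to keep only one nonvanishing local cohomology module at each step) together with the compatibility $\Hom_R(N,E)\cong\Hom_{R/(y)}(N,E_{R/(y)})$ for modules killed by $y$ --- is handled correctly in your write-up.
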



\end{document}